\numberwithin{equation}{section}  \makeatletter\@addtoreset{equation}{section}
\newtheorem {theorem}{Theorem}[section]        
   \newtheorem {corollary}[theorem]{Corollary}     \newtheorem {remark}[theorem]{Remark}
\newtheorem {proposition}[theorem]{Proposition}       
\newcommand{\C}{\mathbb C}       \newcommand{\Z}{\mathbb Z} 	 
    \newcommand{\bz}{\overline{z}}  
\newcommand{\minmn}{m\wedge n}  \newcommand{\minms}{m\wedge s}
 \newcommand{\minnr}{n\wedge r}
\begin{document}

\title[]{Generalized Zernike polynomials: Operational formulae and generating functions}
\thanks{A. G. is partially supported by the Hassan II Academy of Sciences and Technology}


    \author[]
    {Bouchra Aharmim}
    \address[B. Aharmim]{Department of Mathematics and Informatics,  Faculty of Sciences, Ben M'sik, Hassan II University, Casablanca, Morocco}
     \email{bouchra.aharmim@gmail.com}
    \author[]
    {Amal El Hamyani}
    \address[A. El Hamyani]{Laboratory of Analysis and Applications-URAC/03, Department of Mathematics, P.O. Box 1014,  Faculty of Sciences, Mohammed V University, Rabat, Morocco}
     \email{amalelhamyani@gmail.com}
    \author[]
    {Fouzia El Wassouli}
    \address[F. El Wassouli]{Department of Mathematics and Informatics,  Faculty of Sciences, A\"{i}n Chock, Hassan II University, Casablanca, Morocco}
    \email{elwassouli@gmail.com}
   \author[]
   {Allal Ghanmi}
    \address[A. Ghanmi]{Laboratory of Analysis and Applications-URAC/03, Department of Mathematics, P.O. Box 1014,  Faculty of Sciences, Mohammed V University, Rabat, Morocco}
    \email{ag@fsr.ac.ma}

\begin{abstract}
We establish new operational formulae of Burchnall type for the complex disk polynomials (generalized Zernike polynomials). We then use them
           to derive some interesting identities involving these polynomials. In particular, we establish
            recurrence relations with respect to the argument and the integer indices, as well as Nielsen identities and Runge addition formula. In addition, various new generating functions for these disk polynomials are proved.
\end{abstract}
\keywords{
Complex disk polynomials; Operational formulae; Nielsen's identities; Runge's addition formula; Generating functions
}
\subjclass[2010]{33C45; 42C05; 58C40}
\maketitle

\section{Introduction} 

Operational formulae of Burchnall type are powerful tools in the theory of orthogonal polynomials \cite{Burchnall41,GouldHopper62,Chatterjea63a,Al-Salam64,Al-SalamIsmail75}.
They have been used to obtain new identities or to give simpler proofs of well-known ones.
In \cite{Burchnall41}, Burchnall employed the operational formula
 \begin{align*}
              \left(- \frac{d}{dx} +2x\right)^{m}(f) = m! \sum\limits_{k=0}^{m} \frac{(-1)^{k}}{k!}
              \frac{H_{m-k}(x)}{(m-k)!} \frac{d^k}{dx^k}(f).
          \end{align*}
 to prove, in a direct and simple way, the Nielsen identity \cite{Nielsen18}
as well as the Runge addition formula \cite{Runge14} for the classical Hermite polynomials $H_{m}(x)$.
Since then, many extensions to specific  polynomials in one real variable have been obtained
\cite{Chatterjea63a,PatilThakare78,ChatterjeaSrivastava93,PurohitRaina09,Kaanoglu12}.
For example, in \cite{Singh65}, R.P. Singh has developed  an operational relation
 for the Jacobi polynomials $ \displaystyle P^{(\alpha,\beta)}_n(x)$ which was used to derive some useful properties such as
  the quadratic recurrence formula
\begin{align*}
P^{(\alpha,\beta)}_{n+m}(x)   &:= \frac{n!m!}{(n+m)!}\sum_{k=0}^n 
 \frac{(-1)^k(\alpha+\beta+2n+m+1)_k}{2^{2k} k!}
 \\&\times
 (1-x^2)^{k}P^{(\alpha+k,\beta+k)}_{n-k}(x)P^{(\alpha+n+k,\beta+n+k)}_{m-k}(x).
\end{align*}
 In the present paper, we develop some new operational formulae for the complex analogue of the Jacobi polynomials, the
  disk polynomials in the two conjugate complex variables $z,\bar z$  in the complex unit disk $D\subset \C$.
  We define them here as
  \begin{align}\label{ZernikePol}
  \mathcal{Z}_{m,n}^\gamma (z,\bar z)
   = (-1)^{m+n}  (1- |z|^2)^{-\gamma} \frac{\partial^{m+n}}{\partial z^m \partial \bar z^n}\left( (1- |z|^2)^{\gamma+m+n} \right).
  \end{align}
Notice that the normalization adopted here differs from the one adopted for the disk polynomials considered by
W\"unsche \cite{Wunsch05} and denoted $ \displaystyle P_{m,n}^{\gamma}(z, \bar z)$ or by Dunkl \cite{Dunkl83,Dunkl84} and denoted $ \displaystyle R_{m,n}^{(\gamma)}(z)$.
In fact, we have (see Remark \ref{WunscheDunkel}):
$$\mathcal{Z}_{m,n}^\gamma (z,\bar z)  = (\gamma+1)_{m+n} \overline{P_{m,n}^{\gamma}(z, \bar z)}= (\gamma+1)_{m+n} \overline{R_{m,n}^{(\gamma)}(z)} .$$
These polynomials are often referred to as generalized Zernike polynomials as they are related to the real Zernike polynomials  $ \displaystyle R^\nu_k(x)$,
introduced in \cite{ZernikeBrinkman35} and used in the study of diffraction problems, by the relation
$$\mathcal{Z}_{m,n}^0 (z,\bar z) =   (m+n)! e^{i[(n-m)\arg z]}  R^{n-m}_{m+n}(\sqrt{z\bar z}); \quad m\leq n .$$

  An accurate analysis of the basic properties of $ \displaystyle \mathcal{Z}_{m,n}^\gamma (z,\bar z)$, like recurrence relations with respect to the indices $m$ and $n$, differential equations they might obey, some generating functions and so on, have been developed in various papers from different point of views. See \cite{Koornwinder75,Koornwinder78} by Koornwinder for a very nice account on these polynomials and \cite{Wunsch05} for an elegant reintroduction of them. The interested reader can refer also to the recent works \cite{Torre08,Gh08OM,Kanjin2013,Thirulo13,DunklXu14}.
We mention that the disk polynomials appear quite frequently when investigating spectral properties of some special differential operators of Laplacian type acting on $ \displaystyle L^{2,\gamma}(D):=L^2(D;(1-|z|^2)^\gamma d\lambda)$; $d\lambda$ being the Lebesgue measure (see Section 2).
They form a complete orthogonal system (basis) over the Hilbert space $ \displaystyle L^{2,\gamma}(D)$ when $\gamma>-1$.

Even though these polynomials have been known for a long time, little attention has been paid to the operational formulae of Burchnall type in the literature.
Our main objective here is to develop some of these formulae that we will apply in order to obtain some new remarkably interesting identities, including Nielsen identities. We establish also a Runge addition formula involving the disk polynomials. These operational representations are very convenient for obtaining new generating functions which may suggest new applications in physics and combinatorics.

The various results presented in this paper for $ \displaystyle \mathcal{Z}_{m,n}^\gamma (z,\bar z)$ are motivated essentially by those that are obtained in \cite{Gh08,Gh13,Ismail13a}
for the complex Hermite polynomials
\begin{align} \label{CHP}
H_{p,q}(z,\bz )=(-1)^{p+q}e^{z\bz }\dfrac{\partial ^{p+q}}{\partial \bz^{p} \partial z^{q}} \left(e^{-z \bz }\right); \quad z\in\C ,
\end{align}
except that the computations with the disk polynomials  are rather difficult.

The remainder of the paper is organised as follows. In Section 2, we review the factorization method, due to Schr\"odinger,
for the magnetic Laplacian on the unit disk (viewed as a  hyperbolic space) and recall some  properties of the suggested disk polynomials $ \displaystyle \mathcal{Z}_{m,n}^\gamma (z,\bar z)$.
We point out their dependance on the hyperbolic geometry of the disk as well as the physical meaning of the parameter $\gamma$.
In Section 3, we give operational formulae of Burchnall type involving these complex disk polynomials.
Some of their applications are discussed in the next sections.
Indeed, Section 4 deals with recurrence quadratic formulae (called Nielsen identities).
In Section 5, we establish new three-term recurrence formulae with respect to the indices $m,n$ and the continuous parameter $\gamma$.
Section 6 is devoted to the Runge addition formula. In Sections 7 and 8, new generating functions and summation formulae are obtained.

\section{Generation of the complex disk polynomials}
In this section, we review the algebraic method used to generate the disk polynomials, starting from a special magnetic Laplacian $ \displaystyle \mathfrak{L}_\nu$ on the complex unit disk $D\subset \C$
acting on the $ \displaystyle L^2$-Hilbert space $ \displaystyle L^2(D ;(1 - |z|^2)^{-2}d\lambda)$, where $d\lambda(z)= dxdy$ denotes the Lebesgue measure.
It is given by
\begin{align*}
\mathfrak{L}_\nu=     \nabla_{\nu-1}  \nabla_{\nu-1}^{*}  + \nu \quad \mbox{and} \quad
\mathfrak{L}_{\nu -1}= \nabla_{\nu-1}^{*}  \nabla_{\nu-1}  - (\nu-1) ,
 \end{align*}
where the first order differential operator $\nabla_\alpha$ and its formal adjoint
$\nabla_\alpha^{*}$ are given by
\begin{align*}
\nabla_\alpha = -(1 - |z|^2)\frac{\partial}{\partial z} + \alpha \bar z \quad \mbox{and} \quad
\nabla_\alpha^{*}  = (1 - |z|^2)\frac{\partial}{\partial \bar z} + (\alpha +1) z.
\end{align*}
The twisted Laplacian $\mathfrak{L}_\nu$ is an elliptic self-adjoint second order differential operator whose explicit spectral
theory is well-known in the literature \cite{Zhang92,FV}.
In particular, its discrete $L^2$-spectrum is nontrivial if and only if $\nu > 1/2$. In such case, it is known to be given by the eigenvalues $
\lambda_{\nu,m}:=\nu(2m+1) - m(m+1)$ for $m$ being a positive integer such that $0\leq m< \nu -1/2$.
On the other hand, we know since Schr\"odinger that the factorization method  allows one to construct $L^2$-eigenfunctions (see \cite{Sch40/41,FV}).
Indeed, our Laplacian $\mathfrak{L}_\nu$ satisfies the following algebraic relation
  \begin{align*}
\mathfrak{L}_\nu \nabla_{\nu-1} = \left(\nabla_{\nu-1}  \nabla_{\nu-1}^{*} + \nu \right) \nabla_{\nu-1}
                               = \nabla_{\nu-1} \left(  \mathfrak{L}_{\nu -1} +(2\nu-1) \right) .
 \end{align*}
 This implies that the intertwine operator $\nabla_{\nu-1}$ generates eigenfunctions of $\mathfrak{L}_{\nu}$  from those of $\mathfrak{L}_{\nu-1}$.
 Doing so we perform
 \begin{equation}
\nabla^{\nu}_{m} := \nabla_{\nu-1} \circ \nabla_{\nu-2}\circ \cdots \circ \nabla_{\nu - m} = (-1)^m \prod_{j=1}^m \left( (1 - |z|^2)\frac{\partial}{\partial z} -(\nu -j) \bar z \right) \label{mthoder}
\end{equation}
and therefore, if $\varphi_{0}$ is a nonzero $L^2$-eigenfunction of $\mathfrak{L}_{\nu - m}$, then $ \nabla^{\nu}_{m}\varphi_{0}$ is an eigenfunction of $\mathfrak{L}_\nu $ belonging to
$\displaystyle   A^{2,\nu}_m(D):=\{\varphi \in L^2(D ;d\mu);  ~~ \mathfrak{L}_\nu  \varphi =\lambda_{\nu,m} \varphi  \}.$
 Conversely, it is shown in \cite{Gh08OM} that this method describes completely the $L^2$-eigenspaces $A^{2,\nu}_m(D)$.
 More precisely, if for every fixed $\nu>1/2$ and $ 0\leq m < \nu - 1/2$, and varying $n=0,1,2, \cdots $, we consider
  \begin{align}
  \psi_{m,n}^{\nu}(z,\bar z):= \nabla^{\nu}_{m} \left(z^n(1 - |z|^2)^{\nu-m} \right),
\end{align}
 then the functions $ \displaystyle \psi_{m,n}^{\nu}$ are $L^2$-eigenfunctions of $\mathfrak{L}_\nu$. Furthermore, they form
  an orthogonal basis of the $L^2$-eigenspace $ \displaystyle A^{2,\nu}_m(D)$.
 Hence, by rewriting $\nabla_\alpha$ as
   $$   \nabla_{\alpha} f=-(1 - |z|^2)^{1- \alpha}\frac{\partial}{\partial z} [(1 - |z|^2)^{\alpha} f ] ,$$
   we obtain
\begin{align}\label{likeRodrigues}
 \psi_{m,n}^{\nu}(z,\bar z) = (-1)^m (1-|z|^2)^{-\nu} \left[(1 - |z|^2)^{2}
\frac{\partial}{\partial z}\right]^{m}\Big(z^n(1-|z|^2)^{2(\nu-m)}\Big)\,.
\end{align}
This suggests the following class of polynomials in the two variables $z$ and $\bz$:
 \begin{align}
 \Phi_{m,n}^{\gamma}(z, \bar z)  &:= (1 - |z|^2)^{-\nu  +m}\nabla^{\nu}_{m} \left( z^n (1 - |z|^2)^{\nu-m}\right) \label{Nablam}\\
 &= (-1)^m (1-|z|^2)^{-(\gamma+m+1)} \left[(1 - |z|^2)^{2} \frac{\partial}{\partial z}\right]^{m}\Big(z ^{n}(1-|z|^2)^{\gamma+1}\Big) ,\label{KappaPoly0}
  \end{align}
where $\gamma=2(\nu-m)-1$.
More explicitly we have (\cite{Gh08OM}):
\begin{align}
 \Phi_{m,n}^{\gamma}(z, \bar z) = (-1)^{m}  (\minmn )! |z|^{|m-n|}e^{i[(n-m)\arg z]} \mathrm{P}^{(|m-n|,\gamma)}_{\minmn }(1-2 |z|^2), \label{DiscPoly}
 \end{align}
 where $m \wedge n=\min\{m,n\}$ and $ \displaystyle \mathrm{P}^{(\alpha,\beta)}_{k}(x)$ denotes the real Jacobi Polynomials.
In addition, it can be realized in terms of the following differential operator
  \begin{eqnarray} \label{diff-opAm}
  \mathcal{A}_m(f):= (1 -  |z|^2)^{m+1} \frac{\partial^m}{\partial z^m} \left( (1 -  |z|^2)^{m-1} f(z)\right).
\end{eqnarray}
Moreover, a simple induction yields
\begin{proposition} \label{propVeryClosed}
For every sufficiently differentiable function $f$, we have
\begin{eqnarray} \label{Closedh2}
\left[(1 -  |z|^2)^2\frac{\partial}{\partial z}\right]^m (f) =(1 -  |z|^2)^{m+1}
\frac{\partial^m}{\partial z^m} \left( (1 -  |z|^2)^{m-1}f\right) .
\end{eqnarray}
In particular, $ \displaystyle  \mathcal{A}_{m+m'}= \mathcal{A}_{m}\circ \mathcal{A}_{m'}$.
\end{proposition}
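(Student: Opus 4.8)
The plan is to establish \eqref{Closedh2} by induction on $m$ and then read off the composition rule as an immediate consequence. For the base case I would take $m=1$, where \eqref{Closedh2} is the tautology $(1-|z|^2)^2\frac{\partial}{\partial z}(f)=(1-|z|^2)^2\frac{\partial}{\partial z}\bigl((1-|z|^2)^0 f\bigr)$. Assuming \eqref{Closedh2} for some $m\ge 1$, I would apply $(1-|z|^2)^2\frac{\partial}{\partial z}$ once more and insert the induction hypothesis to get
\[
\left[(1-|z|^2)^2\frac{\partial}{\partial z}\right]^{m+1}(f)=(1-|z|^2)^2\frac{\partial}{\partial z}\left((1-|z|^2)^{m+1}\frac{\partial^m}{\partial z^m}\bigl((1-|z|^2)^{m-1}f\bigr)\right),
\]
so that everything reduces to evaluating a single $z$-derivative of a product.

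The computational device I would use is that $\frac{\partial}{\partial z}(1-|z|^2)=-\bar z$ while $\frac{\partial^2}{\partial z^2}(1-|z|^2)=0$, so the Leibniz expansion of $\frac{\partial^{m+1}}{\partial z^{m+1}}\bigl((1-|z|^2)h\bigr)$ collapses to just two surviving terms for any function $h$. Writing $h:=(1-|z|^2)^{m-1}f$, I expect to verify that both
\[
\frac{\partial}{\partial z}\left((1-|z|^2)^{m+1}\frac{\partial^m}{\partial z^m}h\right)\qquad\text{and}\qquad(1-|z|^2)^m\frac{\partial^{m+1}}{\partial z^{m+1}}\bigl((1-|z|^2)h\bigr)
\]
are equal to the common value $(1-|z|^2)^{m+1}\frac{\partial^{m+1}}{\partial z^{m+1}}h-(m+1)\bar z\,(1-|z|^2)^m\frac{\partial^m}{\partial z^m}h$: the first by the ordinary product rule, the second by the truncated Leibniz rule above. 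Multiplying that common value by $(1-|z|^2)^2$ then identifies $\left[(1-|z|^2)^2\frac{\partial}{\partial z}\right]^{m+1}(f)$ with $(1-|z|^2)^{m+2}\frac{\partial^{m+1}}{\partial z^{m+1}}\bigl((1-|z|^2)^m f\bigr)$, since $(1-|z|^2)h=(1-|z|^2)^m f$; this is precisely \eqref{Closedh2} with $m+1$ in place of $m$, closing the induction. The only (very mild) obstacle is this bit of bookkeeping in the inductive step—matching the two expressions after the Leibniz truncation—everything else being purely formal.

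For the last assertion, I would note that \eqref{Closedh2} says exactly that the differential operator $\mathcal{A}_m$ of \eqref{diff-opAm} coincides, on sufficiently differentiable functions, with the $m$-th power of $(1-|z|^2)^2\frac{\partial}{\partial z}$. The identity $\mathcal{A}_{m+m'}=\mathcal{A}_m\circ\mathcal{A}_{m'}$ is then immediate, because
\[
\mathcal{A}_m\circ\mathcal{A}_{m'}=\left[(1-|z|^2)^2\frac{\partial}{\partial z}\right]^{m}\circ\left[(1-|z|^2)^2\frac{\partial}{\partial z}\right]^{m'}=\left[(1-|z|^2)^2\frac{\partial}{\partial z}\right]^{m+m'}=\mathcal{A}_{m+m'}.
\]
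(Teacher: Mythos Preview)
Your argument is correct and follows exactly the approach the paper indicates: the paper merely says that \eqref{Closedh2} follows by ``a simple induction'' and does not give further details, so your write-up is a faithful fleshing-out of that induction. The deduction of $\mathcal{A}_{m+m'}=\mathcal{A}_m\circ\mathcal{A}_{m'}$ from \eqref{Closedh2} via the identification of $\mathcal{A}_m$ with the $m$-th iterate of $(1-|z|^2)^2\partial_z$ is also precisely what the paper has in mind.
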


According to \eqref{KappaPoly0} and Proposition \ref{propVeryClosed}, we have
  \begin{align} \label{PmnA}
   \Phi_{m,n}^{\gamma}(z,\bar z) =    (-1)^{m}(1 -  |z|^2)^{-\gamma} \frac{\partial^{m}}{\partial z^m} (z^n (1 -  |z|^2)^{\gamma+m})
  \end{align}
which gives rise to the Rodrigues' type formula \cite{Wunsch05,Gh08OM}:
  \begin{align}
  C_{m,n}^\gamma \Phi_{m,n}^{\gamma}(z,\bar z) &=(-1)^{m+n}
  (1 -  |z|^2)^{-\gamma} \frac{\partial^{m+n}}{\partial z^m \partial \bar z ^n}\left((1 -  |z|^2)^{\gamma+m+n}\right) ,  \label{RTF2}
  \end{align}
  where   $  C_{m,n}^\gamma := (\gamma+m+1)_n $  and $(a)_n:=a(a+1) \cdots (a+n-1)$ stands for the Pochhammer symbol.
 From now on, instead of $ \displaystyle \Phi_{m,n}^\gamma (z,\bar z)$, we deal with the polynomials $ \displaystyle \mathcal{Z}_{m,n}^\gamma (z,\bar z) : = C_{m,n}^\gamma \Phi_{m,n}^\gamma (z,\bar z)$,  so that
  \begin{align}
  \mathcal{Z}_{m,n}^\gamma (z,\bar z) &\stackrel{\eqref{Nablam}}{=}
   C_{m,n}^\gamma (1 - |z|^2)^{-\frac{\gamma+1}2}\nabla^{\frac{\gamma+1}2 +m}_{m} \left( z^n (1 - |z|^2)^{\frac{\gamma+1}2}\right)  \\
 & \stackrel{\eqref{PmnA}}{=} (-1)^{m} C_{m,n}^\gamma (1 -  |z|^2)^{-\gamma} \frac{\partial^{m}}{\partial z^m} (z^n (1 -  |z|^2)^{\gamma+m}) \label{2.12}\\
    & \stackrel{\eqref{RTF2}}{=} (-1)^{m+n}  (1 -  |z|^2)^{-\gamma} \frac{\partial^{m+n}}{\partial z^m \partial \bar z^n} \left( (1 -  |z|^2)^{\gamma+m+n} \right).
\end{align}
Note for instance that we have
$$  \overline{\mathcal{Z}_{m,n}^\gamma (z,\bar z)} =  \mathcal{Z}_{m,n}^\gamma (\bar z, z) =  \mathcal{Z}_{n,m}^\gamma (z,\bar z)
\quad \mbox{ and } \quad
\mathcal{Z}_{0,s}^\beta (z,\bar z) = \overline{\mathcal{Z}_{s,0}^\beta (z,\bar z)}=(\beta +1)_s z^s.$$

\section{Operational formulae for the disk polynomials}

In this section we derive new operational formulae for the disk polynomials similar to those obtained in
\cite{Gh13} for the complex Hermite polynomials $ \displaystyle H_{p,q}(z,\bz )$ defined through \eqref{CHP}.
To this end, we need to introduce the following differential operators
  \begin{align}
  \mathcal{A}_{m,n}^\gamma (f)   & :=
   (-1)^m C_{m,n}^\gamma (1 -  |z|^2)^{-\gamma}  \frac{\partial^m}{\partial z^m} \left( z^n (1 -  |z|^2)^{\gamma+m} f \right), \label{diff-opAmnf}\\
   \mathcal{B}_{m,n}^\gamma (f) &:=   (-1)^{m+n}  (1 -  |z|^2)^{-\gamma} \frac{\partial^{m+n}}{\partial z^m \partial \bar z^n} \left( (1 -  |z|^2)^{\gamma+m+n} f \right)  , \label{diff-opZmn2}  \\
    \nabla^{\gamma,\gamma'}_{m,n} (f) &=  \nabla^{\gamma}_{m} \circ  \overline{\nabla}^{\gamma'}_{n} (f), \label{diff-opNablamn}
\end{align}
where
$ \nabla^\alpha_{m} = \nabla_{\alpha-1}\circ\nabla_{\alpha-2}\circ \cdots \circ \nabla_{\alpha-m}$
with
$\nabla_{\alpha}= -(1 -  |z|^2)\frac{\partial}{\partial z} + \alpha \bar z $ and
$\overline{\nabla}^\alpha_{n}  = \overline{\nabla}_{\alpha-1}\circ\overline{\nabla}_{\alpha-2}\circ \cdots \circ \overline{\nabla}_{\alpha-n}$
with
$\overline{\nabla}_{\alpha}=  -(1 -  |z|^2)\frac{\partial}{\partial \bar z} + \alpha z,$
so that
  \begin{align}\label{diff-opZernike2}
  [\mathcal{A}_{m,n}^\gamma(1)] (z)  = [ \mathcal{B}_{m,n}^\gamma (1)] (z)  =   \mathcal{Z}_{m,n}^\gamma (z,\bar z).
\end{align}

The main result of this section is the following.
\begin{theorem} \label{Burch-OpForm}
For given positive integers $m,n$, we have the following operational formulae of Burchnall type involving $ \displaystyle \mathcal{Z}_{m,n}^\gamma (z,\bar z)$:
  \begin{align}
   \mathcal{A}_{m,n}^{\gamma}(f)
 &= m!n! \sum_{j=0}^{m}   \frac{(-1)^{j} (1 -  |z|^2)^{j}}{j!} \frac{ \mathcal{Z}_{m-j,n}^{\gamma+j}(z,\bar{z})}{(m-j)! n!}
  \frac{\partial^{j}}{\partial z^j}(f),
    \label{Burch-OpForm1}\\
   \mathcal{B}_{m,n}^{\gamma}(f)
 &=  m!n! \sum_{j=0}^{m} \sum_{k=0}^{n}
     \frac{(-1)^{j+k} (1 -  |z|^2)^{j+k}}{j!k!}  \frac{ \mathcal{Z}_{m-j,n-k}^{\gamma+j+k}(z,\bar{z})}{(m-j)!(n-k)!}
     \frac{\partial^{j+k}}{\partial z^j \partial \bar z^k} (f),
    \label{Burch-OpForm2}\\
   \nabla^{\gamma,\gamma'}_{m,n} (f)
  &=   m!n! \sum_{j=0}^{m}\sum_{k=0}^{n}   \frac{(\gamma'+ k - n)_{n-k}}{(\gamma+k)_{n-k}} \label{Burch-OpFormNabla} \\
  &  \qquad \qquad  \frac{(-1)^{j+k} (1 -  |z|^2)^{j+k}}{j!k!} \frac{\mathcal{Z}_{m-j,n-k}^{(\gamma-1-m)+k+j}(z,\bar{z})}{(m-j)!(n-k)!} \frac{\partial^{k+j}}{\partial z^{j}\partial \bar{z}^{k}}(f),      \nonumber
\end{align}
for every sufficiently differentiable function $f$.
\end{theorem}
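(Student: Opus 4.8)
The plan is to derive all three formulae from the Leibniz rule together with the Rodrigues-type representations \eqref{2.12} and \eqref{RTF2}, read as identities for the iterated derivatives $\partial_z^{p}(z^{q}(1-|z|^2)^{\delta+p})$ and $\partial_z^{p}\partial_{\bar z}^{q}((1-|z|^2)^{\delta+p+q})$. I begin with \eqref{Burch-OpForm1}. Setting $g:=z^{n}(1-|z|^2)^{\gamma+m}$ in \eqref{diff-opAmnf} and applying the Leibniz rule, $\partial_z^{m}(gf)=\sum_{j=0}^{m}\binom{m}{j}(\partial_z^{m-j}g)\,\partial_z^{j}f$, the key point is that $\partial_z^{m-j}g=\partial_z^{m-j}(z^{n}(1-|z|^2)^{(\gamma+j)+(m-j)})$ equals, by \eqref{2.12}, $(-1)^{m-j}(C_{m-j,n}^{\gamma+j})^{-1}(1-|z|^2)^{\gamma+j}\,\mathcal{Z}_{m-j,n}^{\gamma+j}(z,\bar z)$. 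Substituting this into $\mathcal{A}_{m,n}^{\gamma}(f)$, the prefactor $(1-|z|^2)^{-\gamma}$ absorbs the weight $(1-|z|^2)^{\gamma+j}$ into $(1-|z|^2)^{j}$, the signs collapse to $(-1)^{j}$, and since $C_{m-j,n}^{\gamma+j}=(\gamma+j+(m-j)+1)_{n}=(\gamma+m+1)_{n}=C_{m,n}^{\gamma}$ the overall constant cancels; with $\binom{m}{j}=m!\,n!/(j!\,(m-j)!\,n!)$ this is exactly \eqref{Burch-OpForm1}.

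For \eqref{Burch-OpForm2} the same mechanism applies to the mixed derivative $\partial_z^{m}\partial_{\bar z}^{n}$ acting on $hf$, where $h:=(1-|z|^2)^{\gamma+m+n}$. Since $\partial_z$ and $\partial_{\bar z}$ commute, the double Leibniz rule gives $\partial_z^{m}\partial_{\bar z}^{n}(hf)=\sum_{j=0}^{m}\sum_{k=0}^{n}\binom{m}{j}\binom{n}{k}(\partial_z^{m-j}\partial_{\bar z}^{n-k}h)\,\partial_z^{j}\partial_{\bar z}^{k}f$, and by \eqref{RTF2} (equivalently the last display of Section 2), since $(\gamma+j+k)+(m-j)+(n-k)=\gamma+m+n$, one has $\partial_z^{m-j}\partial_{\bar z}^{n-k}h=(-1)^{(m-j)+(n-k)}(1-|z|^2)^{\gamma+j+k}\,\mathcal{Z}_{m-j,n-k}^{\gamma+j+k}(z,\bar z)$. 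Inserting this, using $(-1)^{m+n}(-1)^{(m-j)+(n-k)}=(-1)^{j+k}$ and combining the weights into $(1-|z|^2)^{j+k}$, gives \eqref{Burch-OpForm2}.

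The formula \eqref{Burch-OpFormNabla} needs preparation. From the factorization $\nabla_{\alpha}g=-(1-|z|^2)^{1-\alpha}\partial_z((1-|z|^2)^{\alpha}g)$ recorded in Section 2, and its conjugate for $\overline{\nabla}_{\alpha}$, one iterates along $\nabla^{\gamma}_{m}=\nabla_{\gamma-1}\circ\cdots\circ\nabla_{\gamma-m}$ (the interior weights telescope) and applies Proposition \ref{propVeryClosed} to obtain the operator identities $\nabla^{\gamma}_{m}(g)=(-1)^{m}(1-|z|^2)^{m+1-\gamma}\partial_z^{m}((1-|z|^2)^{\gamma-1}g)$ and $\overline{\nabla}^{\gamma'}_{n}(g)=(-1)^{n}(1-|z|^2)^{n+1-\gamma'}\partial_{\bar z}^{n}((1-|z|^2)^{\gamma'-1}g)$. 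Composing them yields
\[
\nabla^{\gamma,\gamma'}_{m,n}(f)=(-1)^{m+n}(1-|z|^2)^{m+1-\gamma}\,\partial_z^{m}\Big((1-|z|^2)^{\gamma+n-\gamma'}\,\partial_{\bar z}^{n}\big((1-|z|^2)^{\gamma'-1}f\big)\Big).
\]
Next I expand the inner $\partial_{\bar z}^{n}$ by Leibniz, using $\partial_{\bar z}^{n-k}(1-|z|^2)^{\gamma'-1}=(-z)^{n-k}(\gamma'+k-n)_{n-k}(1-|z|^2)^{\gamma'-1-(n-k)}$; after multiplying by $(1-|z|^2)^{\gamma+n-\gamma'}$ the weight multiplying $\partial_{\bar z}^{k}f$ reduces to $(1-|z|^2)^{\gamma-1+k}$. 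A second Leibniz expansion of the outer $\partial_z^{m}$ produces the factor $\partial_z^{m-j}(z^{n-k}(1-|z|^2)^{\gamma-1+k})$, which by \eqref{2.12} with $p=m-j$, $q=n-k$ and $\delta=(\gamma-1-m)+k+j$ (so that $\delta+p=\gamma-1+k$) equals $(-1)^{m-j}(C_{m-j,n-k}^{(\gamma-1-m)+k+j})^{-1}(1-|z|^2)^{(\gamma-1-m)+k+j}\,\mathcal{Z}_{m-j,n-k}^{(\gamma-1-m)+k+j}(z,\bar z)$. Collecting all contributions, the powers of $(1-|z|^2)$ add up to $(1-|z|^2)^{j+k}$, the signs to $(-1)^{j+k}$, the binomials to $m!\,n!/(j!\,k!\,(m-j)!\,(n-k)!)$, and --- the one genuinely delicate step --- $C_{m-j,n-k}^{(\gamma-1-m)+k+j}=((\gamma-1-m)+k+j+(m-j)+1)_{n-k}=(\gamma+k)_{n-k}$, so the surviving Pochhammer factor is precisely $(\gamma'+k-n)_{n-k}/(\gamma+k)_{n-k}$; this is \eqref{Burch-OpFormNabla}.

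I expect the only genuine obstacle to lie in the bookkeeping of this last identity: the repeated cancellations among powers of $(1-|z|^2)$, the sign count, and the two-step reduction of Pochhammer symbols must all line up at once, and a single off-by-one in the shift $\delta=(\gamma-1-m)+k+j$ or in the argument of a Pochhammer symbol would propagate through the whole computation. The first two formulae, by contrast, are a direct (respectively twofold) application of the Leibniz rule once \eqref{2.12} and \eqref{RTF2} are in hand.
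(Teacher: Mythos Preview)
Your proof is correct and follows essentially the same route as the paper: Leibniz applied to \eqref{diff-opAmnf} and \eqref{diff-opZmn2} together with the identifications from \eqref{2.12} and \eqref{RTF2}, and for \eqref{Burch-OpFormNabla} the same reduction of $\nabla^{\gamma,\gamma'}_{m,n}$ to the mixed-derivative form $(-1)^{m+n}(1-|z|^2)^{m+1-\gamma}\partial_z^{m}\big((1-|z|^2)^{\gamma+n-\gamma'}\partial_{\bar z}^{n}((1-|z|^2)^{\gamma'-1}f)\big)$ followed by a double Leibniz expansion. Your bookkeeping of the Pochhammer shift $C_{m-j,n-k}^{(\gamma-1-m)+k+j}=(\gamma+k)_{n-k}$ and of the sign and weight cancellations is accurate and matches the paper's computation.
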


\begin{remark}
Similar formulae can be obtained using
$\sum\limits_{k=0}^m A_k = \sum\limits_{k=0}^m A_{m-k}.$
\end{remark}

\begin{proof}
We start from \eqref{diff-opAmnf} and we make use of the Leibnitz formula to get
  \begin{align*}
  \mathcal{A}_{m,n}^\gamma(f)
   = (-1)^m C_{m,n}^\gamma (1 -  |z|^2)^{-\gamma} \sum_{j=0}^{m} \binom{m}{j}  \left( \frac{\partial^{m-j}}{\partial z^{m-j}} ( z^n (1 -  |z|^2)^{\gamma+m})\right)
  \left( \frac{\partial^j}{\partial z^j} f \right).
 \end{align*}
The statement follows from  \eqref{diff-opZernike2}  since $C_{m,n}^\gamma=C_{m-j,n}^{\gamma+j}$.

The proof of \eqref{Burch-OpForm2} is quite similar by repeated application of the Leibnitz formula to \eqref{diff-opZmn2}
combined with the fact that
\begin{align*}
\frac{\partial^{(m-j)+(n-k)}}{\partial z^{m-j} \partial \bar z^{n-k}}( (1 -  |z|^2)^{\gamma+m+n})
 =  (-1)^{(m-j)+(n-k)} (1 -  |z|^2)^{\gamma+j+k}   \mathcal{Z}_{m-j,n-k}^{\gamma+j+k}(z,\bar{z}) .
 \end{align*}

To prove \eqref{Burch-OpFormNabla}, notice first that we have
\begin{align*}
&\nabla^\gamma_{m} f = (-1)^m (1 -  |z|^2)^{-\gamma} \left[(1 -  |z|^2)^2\frac{\partial}{\partial z}\right]^m ((1 -  |z|^2)^{\gamma-m} f)
\\
&\overline{\nabla}^{\gamma'}_{n} f = (-1)^n (1 -  |z|^2)^{-\gamma'} \left[(1 -  |z|^2)^2\frac{\partial}{\partial \bz}\right]^{n}((1 -  |z|^2)^{\gamma' -n} f) .
\end{align*}
By applying twice Proposition \ref{propVeryClosed}, we get
\begin{align*} \nabla^{\gamma,\gamma'}_{m,n} f
=(-1)^{m+n}(1 -  |z|^2)^{m+1-\gamma}\frac{\partial^{m}}{\partial z^{m}}\left((1 -  |z|^2)^{\gamma-\gamma'+n}\frac{\partial^{n}}{\partial\bar{z}^{n}}((1 -  |z|^2)^{\gamma'-1}f)\right).
\end{align*}
Finally, \eqref{Burch-OpFormNabla} follows by means of Leibnitz formula and the facts that $ \displaystyle (-a)_k=(-1)^k(a-k+1)_k$ and
$ \frac{\partial^{k}}{\partial z^k}((1 -  |z|^2)^{\beta}) = (-\beta)_k {\bar z}^k (1 -  |z|^2)^{\beta-k}$, keeping in mind the expression of $\mathcal{Z}_{m,n}^\gamma (z,\bar z)$ given through \eqref{2.12}.
\end{proof}

An immediate consequence of Theorem \ref{Burch-OpForm}, is the following

\begin{corollary} We have the following identities
 \begin{align}
 & \sum_{j=0}^{m} \frac{(-m)_{j} (\gamma+m)_j }{j!}   {\bar z}^{j} \mathcal{Z}_{m-j,n}^{\gamma+j}(z,\bar{z})  =0 ; \quad m>n, \label{BurchnallConsq1}\\
 & \sum_{j=0}^{m}    \frac{(-m)_{j} (\gamma+m)_j}{j!}  {\bar z}^{j}  \mathcal{Z}_{m-j,n}^{\gamma+j}(z,\bar{z})
   =    (-n)_{m} (\gamma+1+m)_n    z^{n-m} (1-|z|^2)^m  ; \quad m\leq n . \nonumber
\end{align}
\end{corollary}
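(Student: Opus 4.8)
The plan is to feed a single, carefully chosen test function into the first operational formula \eqref{Burch-OpForm1} and then evaluate the resulting identity in two independent ways. The right choice is
\[
f(z) = (1-|z|^2)^{-\gamma-m},
\]
which is smooth on $D$ (since $1-|z|^2>0$ there) and is tailored so that the weight $(1-|z|^2)^{\gamma+m}$ sitting inside the definition \eqref{diff-opAmnf} of $\mathcal{A}_{m,n}^\gamma$ gets cancelled exactly.

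First I would substitute this $f$ into the right-hand side of \eqref{Burch-OpForm1}. Using the formula $\dfrac{\partial^{j}}{\partial z^j}(1-|z|^2)^{\beta}=(-\beta)_j\,\bar z^{\,j}(1-|z|^2)^{\beta-j}$ recalled in the proof of Theorem \ref{Burch-OpForm}, with $\beta=-\gamma-m$ so that $(-\beta)_j=(\gamma+m)_j$, together with $\dfrac{(-1)^j m!}{j!\,(m-j)!}=\dfrac{(-m)_j}{j!}$, every term acquires the common factor $(1-|z|^2)^{-\gamma-m}$ and the sum reduces to
\[
\mathcal{A}_{m,n}^{\gamma}\!\big((1-|z|^2)^{-\gamma-m}\big)
=(1-|z|^2)^{-\gamma-m}\sum_{j=0}^{m}\frac{(-m)_j(\gamma+m)_j}{j!}\,\bar z^{\,j}\,\mathcal{Z}_{m-j,n}^{\gamma+j}(z,\bar z),
\]
so the left-hand sides of both claimed identities appear, up to the harmless scalar factor $(1-|z|^2)^{-\gamma-m}$.

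Next I would compute the same quantity straight from the definition \eqref{diff-opAmnf}: this $f$ is designed precisely so that $z^n(1-|z|^2)^{\gamma+m}f=z^n$, whence, recalling $C_{m,n}^\gamma=(\gamma+m+1)_n$,
\[
\mathcal{A}_{m,n}^{\gamma}\!\big((1-|z|^2)^{-\gamma-m}\big)
=(-1)^m (\gamma+m+1)_n (1-|z|^2)^{-\gamma}\,\frac{\partial^m}{\partial z^m}\big(z^n\big).
\]
Since $\dfrac{\partial^m}{\partial z^m}z^n=\dfrac{n!}{(n-m)!}\,z^{n-m}=(-1)^m(-n)_m\,z^{n-m}$ for $m\le n$ and vanishes for $m>n$ — which is consistent with $(-n)_m=0$ whenever $m>n$ — this equals $(\gamma+m+1)_n(-n)_m(1-|z|^2)^{-\gamma}z^{n-m}$ uniformly in both cases.

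Finally I would equate the two evaluations and multiply through by $(1-|z|^2)^{\gamma+m}$, obtaining
\[
\sum_{j=0}^{m}\frac{(-m)_j(\gamma+m)_j}{j!}\,\bar z^{\,j}\,\mathcal{Z}_{m-j,n}^{\gamma+j}(z,\bar z)
=(-n)_m(\gamma+m+1)_n\,z^{n-m}(1-|z|^2)^{m};
\]
the right-hand side is $0$ when $m>n$ (first identity) and is exactly the displayed closed form when $m\le n$ (second identity). I do not expect a genuine obstacle: the only delicate point is the Pochhammer/binomial bookkeeping and keeping the sign conventions $(-m)_j=(-1)^j m!/(m-j)!$ straight, so the "hard part" is purely organizational — arranging the constants so that the two computations of $\mathcal{A}_{m,n}^{\gamma}\big((1-|z|^2)^{-\gamma-m}\big)$ visibly coincide.
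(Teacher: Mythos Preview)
Your proposal is correct and follows exactly the same approach as the paper: both apply the operational formula \eqref{Burch-OpForm1} and the definition \eqref{diff-opAmnf} to the test function $f(z)=(1-|z|^2)^{-\gamma-m}$, then compare the two resulting expressions using the same Pochhammer bookkeeping. Your write-up is simply a more detailed version of the paper's terse proof.
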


\begin{proof}
The identities follow easily
from  \eqref{Burch-OpForm1} and \eqref{diff-opAmnf} with $ \displaystyle f(z) = (1 -  |z|^2)^{-\gamma-m}$ and making use of $ \displaystyle \frac{ (-1)^{j}}{(m-j)! } = \frac{(-m)_j}{m!}$. In fact, we have
$$\mathcal{A}_{m,n}^{\gamma}((1 -  |z|^2)^{-\gamma-m})=  \dfrac{(-1)^m n! }{(n-m)!} (\gamma+m+1)_n z^{n-m} (1 -  |z|^2)^{-\gamma} $$
when $ n\geq m$ and $ \displaystyle \mathcal{A}_{m,n}^{\gamma}((1 -  |z|^2)^{-\gamma-m})=0$ otherwise.
\end{proof}

\begin{remark}
    The identity \eqref{BurchnallConsq1} for $n=0$ is a special case of the Chu-Vandermonde identity
    $${_2F_1}\left( \begin{array}{c} -k , b \\ c \end{array}\bigg | 1 \right) =\frac{(c-b)_k}{(c)_k}; \quad k \in \Z^+\,.$$
 Indeed, since $ (\gamma+j+1)_{m-j} ={(\gamma+1)_{m}}/{(\gamma+1)_{j}} $,  the left hand side of \eqref{BurchnallConsq1} reduces to
      \begin{align*}
  (\gamma+1)_m {\bar z}^{m} \sum_{j=0}^{m} \frac{(-m)_{j}  (\gamma+m)_j}{ j!(\gamma+1)_j } &=
   (\gamma+1)_m {\bar z}^{m} {_2F_1}\left( \begin{array}{c} -m , \gamma+m \\ \gamma+1 \end{array}\bigg | 1 \right)
   \\& = {\bar z}^{m} (1-m)_m
   =0,
\end{align*}
for every fixed integer $m>0=n$.
\end{remark}

The complex Hermite polynomials can be  retrieved from disk polynomials $ \displaystyle \mathcal{Z}_{m,n}^{\gamma}(z,\bar{z})$ by an appropriate limiting process $\gamma \to +\infty$ (see \cite{Gh08OM}). The following result  gives a direct relationship between $ \displaystyle  H_{m,n}(z, \bar z)$ and $ \displaystyle \mathcal{Z}_{m,n}^{\gamma}(z,\bar{z})$.

\begin{corollary} The complex Hermite polynomials $ \displaystyle H_{m,n}(z, \bar z)$  restricted to the unit disk
are given in terms of the disk polynomials by
 \begin{align*}
  H_{m,n}(z, \bar z) = \frac{m!(1 -  |z|^2)^{-m}}{(\gamma+m+1)_n}  \sum_{j=0}^{m} \sum_{k=0}^{j}
   \frac{(-1)^k(\gamma+m)_k}{k!}  \frac{{\bar z}^{j} (1 -  |z|^2)^{j-k}}{(j-k)!} \frac{ \mathcal{Z}_{m-j,n}^{\gamma+j}(z,\bar{z})}{(m-j)! } .
\end{align*}
\end{corollary}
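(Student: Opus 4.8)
\emph{Approach.} The plan is to specialize the first operational formula \eqref{Burch-OpForm1} to the function
$$ f(z) = (1-|z|^2)^{-\gamma-m}\, e^{-z\bar z}, $$
and to evaluate both sides of that identity explicitly; the announced formula will drop out after cancelling a common factor. The reason this particular $f$ is the right choice is that it makes the Rodrigues-type weight in the operator $\mathcal{A}_{m,n}^{\gamma}$ collapse onto the Gaussian weight attached to the complex Hermite polynomials.

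\emph{Left-hand side.} First I would compute $\mathcal{A}_{m,n}^{\gamma}(f)$ directly from the definition \eqref{diff-opAmnf}. Since $z^n(1-|z|^2)^{\gamma+m} f = z^n e^{-z\bar z}$, one gets
$$ \mathcal{A}_{m,n}^{\gamma}(f) = (-1)^m C_{m,n}^{\gamma}\, (1-|z|^2)^{-\gamma}\, \frac{\partial^m}{\partial z^m}\big(z^n e^{-z\bar z}\big). $$
Writing $z^n e^{-z\bar z} = (-1)^n \partial_{\bar z}^n (e^{-z\bar z})$ and commuting the holomorphic and antiholomorphic derivatives, the quantity $\partial_z^m(z^n e^{-z\bar z})$ is recognized, via the Rodrigues formula \eqref{CHP}, as $(-1)^m e^{-z\bar z}$ times the complex Hermite polynomial $H_{m,n}(z,\bar z)$. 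Hence $\mathcal{A}_{m,n}^{\gamma}(f) = C_{m,n}^{\gamma}\,(1-|z|^2)^{-\gamma}\, e^{-z\bar z}\, H_{m,n}(z,\bar z)$, the factor $(1-|z|^2)^{-\gamma}e^{-z\bar z}$ being harmless since it will cancel at the end.

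\emph{Right-hand side.} Next I would expand the right-hand side of \eqref{Burch-OpForm1}, which requires $\partial_z^j f$. Applying the Leibnitz rule to the product $(1-|z|^2)^{-\gamma-m}\cdot e^{-z\bar z}$, together with the elementary identities $\partial_z^k\big((1-|z|^2)^{-\gamma-m}\big) = (\gamma+m)_k\,\bar z^k (1-|z|^2)^{-\gamma-m-k}$ and $\partial_z^{\,j-k}(e^{-z\bar z}) = (-\bar z)^{\,j-k} e^{-z\bar z}$, one obtains
$$ \partial_z^j f = (-1)^j e^{-z\bar z} (1-|z|^2)^{-\gamma-m}\, \bar z^{\,j}\sum_{k=0}^{j} (-1)^k \binom{j}{k}(\gamma+m)_k (1-|z|^2)^{-k}. $$
Plugging this into \eqref{Burch-OpForm1} and using $\tfrac{1}{j!}\binom{j}{k} = \tfrac{1}{k!(j-k)!}$ turns the sum into exactly the claimed double sum, multiplied by $m!\, e^{-z\bar z}(1-|z|^2)^{-\gamma-m}$.

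\emph{Conclusion and main obstacle.} Equating the two expressions for $\mathcal{A}_{m,n}^{\gamma}(f)$, cancelling the nonvanishing factor $e^{-z\bar z}(1-|z|^2)^{-\gamma}$ (legitimate on $D$, which is why the statement is phrased for the unit disk), and dividing by $C_{m,n}^{\gamma} = (\gamma+m+1)_n$ yields the asserted identity. The only genuinely delicate step is the identification in the second paragraph of $\partial_z^m(z^n e^{-z\bar z})$ with a constant multiple of $e^{-z\bar z}H_{m,n}(z,\bar z)$ through \eqref{CHP}: one must keep careful track of the order of the $z$- and $\bar z$-derivatives and of the global sign $(-1)^{m+n}$. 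Everything else is routine Leibnitz bookkeeping with binomial coefficients and Pochhammer symbols.
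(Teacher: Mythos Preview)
Your proposal is correct and follows exactly the paper's own argument: both specialize \eqref{Burch-OpForm1} to $f=(1-|z|^2)^{-\gamma-m}e^{-|z|^2}$, identify the left-hand side through the Rodrigues formula \eqref{CHP} for $H_{m,n}$, expand $\partial_z^j f$ by Leibnitz on the right, and equate. Your write-up simply makes explicit the bookkeeping that the paper summarizes as ``straightforward computation making use of the Leibnitz rule''; the point you flag as delicate (matching $\partial_z^m(z^n e^{-z\bar z})$ to $H_{m,n}$ via \eqref{CHP}) is precisely the identification the paper invokes without further comment.
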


\begin{proof}
Notice first that $$\mathcal{A}_{m,n}^{\gamma}((1 -  |z|^2)^{-\gamma-m}e^{-|z|^2})=
(\gamma+m+1)_n (1 -  |z|^2)^{-\gamma}e^{-|z|^2}  H_{m,n}(z, \bar z).$$
The right hand side of \eqref{Burch-OpForm1}, with $ \displaystyle f(z) = (1 -  |z|^2)^{-\gamma-m}e^{-|z|^2}$, reads
\begin{align*}
  m! (1 -  |z|^2)^{-\gamma-m}  \sum_{j=0}^{m} \sum_{k=0}^{j}
   \frac{(-1)^k(\gamma+m)_k}{k!}  \frac{{\bar z}^{j} (1 -  |z|^2)^{j-k}}{(j-k)!} \frac{ \mathcal{Z}_{m-j,n}^{\gamma+j}(z,\bar{z})}{(m-j)! } .
\end{align*}
Thus, the result follows from \eqref{Burch-OpForm1} by straightforward computation making use of the Leibnitz rule.
 \end{proof}

Some applications of the  operational formulae obtained above will be discussed in the following sections.

\section{Quadratic recurrence formula}
We consider various expressions for the function $f$ that leads to some new and interesting identities.
We start with $f=z^s$, then by \eqref{Burch-OpForm1}, we get
\begin{align}
   \mathcal{Z}_{m,n+s}^{\gamma}(z,\bar z)
   = m!n!s! C_{m+n,s}^{\gamma} \sum_{j=0}^{\minms }
    \frac{(-1)^{j} z^{s-j} (1 -  |z|^2)^{j}}{(s-j)! j!} \frac{ \mathcal{Z}_{m-j,n}^{\gamma+j}(z,\bar{z})}{(m-j)! n!},
    \label{Burch-OpForm1s}
\end{align}
where $\minms =\min\{m,s\}$. Indeed, this follows from taking into account the fact that
  \begin{align}
    \mathcal{Z}_{m,n+s}^{\gamma+\beta} (z,\bar z) = \dfrac{C_{m,n+s}^{\gamma+\beta}}{C_{m,n}^\gamma} \mathcal{A}_{m,n}^{\gamma}(z^s (1 -  |z|^2)^{\beta}) .
\end{align}
Moreover, for $ \displaystyle f= (1 -  |z|^2)^{s}$, we obtain
\begin{align}
   \mathcal{Z}_{m,n}^{\gamma+s}(z,\bar z)= m!n!s! \frac{C_{m,n}^{\gamma+s}}{C_{m,n}^{\gamma}} \sum_{j=0}^{\minms }
   \frac{{\bar z}^{j}}{(s-j)!j!}  \frac{ \mathcal{Z}_{m-j,n}^{\gamma+j}(z,\bar{z})}{(m-j)! n!}.
    \label{Burch-OpForm1beta}
\end{align}

We now  derive further identities of Nielsen type for the polynomials $ \displaystyle  \mathcal{Z}_{m,n}^{\gamma}(z,\bar z)$. Namely, we state the following

\begin{theorem}
For every fixed positive integers $ m,n, r$ and $s$, we have the following quadratic recurrence formulae
\begin{align}
  \frac{\mathcal{Z}_{m+r,n+s}^{\gamma}(z,\bar{z})}{m!n!r!s!} &=  \sum_{j=0}^{\minms} \sum_{k=0}^{\minnr}  (\gamma+m+n+r+1)_j(\gamma+m+n+s+1)_k \label{Nielsen1} \\
    & \times \frac{(-1)^{j+k} (1 -  |z|^2)^{j+k}}{j!k!}  \frac{ \mathcal{Z}_{m-j,n-k}^{\gamma+j+k}(z,\bar{z})}{(m-j)!(n-k)!}
     \frac{\mathcal{Z}_{r-k,s-j}^{\gamma+m+n+j+k}(z,\bar{z})}{(s-j)!(r-k)!} \nonumber
\end{align}
and
\begin{align}\label{Nielsen2}
   \frac{\mathcal{Z}_{m+r,n}^{\gamma}(z,\bar{z})}{m!n!}
     =    \sum_{j=0}^{\minmn} (\gamma+m+r+1)_j  (\gamma+j+1)_{m-j}\frac{(-1)^{j} {\bar z}^{m-j} (1 -  |z|^2)^{j}}{(m-j)! j!}
       \frac{\mathcal{Z}_{r,n-j}^{\gamma+m+j}(z,\bar{z})}{(n-j)!}.
\end{align}
\end{theorem}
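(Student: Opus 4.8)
The plan is to derive both formulae straight from the operational identities of Theorem~\ref{Burch-OpForm}, after reducing each left-hand side to the action of a single Rodrigues-type operator on a single disk polynomial. First I would record two elementary \emph{semigroup} factorizations: splitting $\partial_z^{m+r}=\partial_z^m\circ\partial_z^r$ in \eqref{2.12}, and $\partial_z^{m+r}\partial_{\bar z}^{n+s}=(\partial_z^m\partial_{\bar z}^n)\circ(\partial_z^r\partial_{\bar z}^s)$ in the Rodrigues form of $\mathcal{Z}_{m,n}^\gamma$ (licit since mixed partials commute), and absorbing the intervening power of $1-|z|^2$, one checks directly that
\begin{align*}
\mathcal{A}_{m+r,n}^{\gamma}=\mathcal{A}_{m,0}^{\gamma}\circ\mathcal{A}_{r,n}^{\gamma+m},
\qquad
\mathcal{B}_{m+r,n+s}^{\gamma}=\mathcal{B}_{m,n}^{\gamma}\circ\mathcal{B}_{r,s}^{\gamma+m+n},
\end{align*}
where in the first identity the normalizations match because $C_{m+r,n}^{\gamma}=C_{r,n}^{\gamma+m}$. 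Evaluating both sides at $f=1$ and using \eqref{diff-opZernike2} gives the two reductions
\begin{align*}
\mathcal{Z}_{m+r,n}^{\gamma}(z,\bar z)=\mathcal{A}_{m,0}^{\gamma}\big(\mathcal{Z}_{r,n}^{\gamma+m}\big),
\qquad
\mathcal{Z}_{m+r,n+s}^{\gamma}(z,\bar z)=\mathcal{B}_{m,n}^{\gamma}\big(\mathcal{Z}_{r,s}^{\gamma+m+n}\big).
\end{align*}

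The second ingredient is the lowering (ladder) relation for the disk polynomials,
\begin{align*}
\frac{\partial}{\partial z}\,\mathcal{Z}_{r,s}^{\gamma'}(z,\bar z)=s\,(\gamma'+r+1)\,\mathcal{Z}_{r,s-1}^{\gamma'+1}(z,\bar z),
\qquad
\frac{\partial}{\partial \bar z}\,\mathcal{Z}_{r,s}^{\gamma'}(z,\bar z)=r\,(\gamma'+s+1)\,\mathcal{Z}_{r-1,s}^{\gamma'+1}(z,\bar z),
\end{align*}
the second following from the first by conjugation together with $\overline{\mathcal{Z}_{r,s}^{\gamma'}}=\mathcal{Z}_{s,r}^{\gamma'}$; either one is obtained from \eqref{2.12} by a short computation (or read off the Jacobi representation \eqref{DiscPoly}, cf.\ \cite{Wunsch05,Gh08OM}). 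Iterating, first $k$ times in $\bar z$ and then $j$ times in $z$, yields
\begin{align*}
\frac{\partial^{\,j+k}}{\partial z^{j}\,\partial\bar z^{k}}\,\mathcal{Z}_{r,s}^{\gamma'}
=\frac{r!\,s!}{(r-k)!\,(s-j)!}\,(\gamma'+r+1)_{j}\,(\gamma'+s+1)_{k}\,\mathcal{Z}_{r-k,\,s-j}^{\gamma'+j+k},
\end{align*}
an expression that vanishes as soon as $j>s$ or $k>r$; this is what automatically cuts the summation ranges of \eqref{Burch-OpForm1}--\eqref{Burch-OpForm2} down to $j\le\minms$, $k\le\minnr$ (resp.\ $j\le\minmn$) in the final identities.

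Then I would assemble \eqref{Nielsen1} by substituting $f=\mathcal{Z}_{r,s}^{\gamma+m+n}$ into \eqref{Burch-OpForm2}, applying the iterated ladder relation with $\gamma'=\gamma+m+n$ to rewrite $\partial_z^{j}\partial_{\bar z}^{k}\mathcal{Z}_{r,s}^{\gamma+m+n}$ as the stated multiple of $\mathcal{Z}_{r-k,s-j}^{\gamma+m+n+j+k}$ — which is precisely where the factors $(\gamma+m+n+r+1)_{j}(\gamma+m+n+s+1)_{k}$ and $r!\,s!/((r-k)!(s-j)!)$ enter — and then dividing through by $m!\,n!\,r!\,s!$. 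Similarly, \eqref{Nielsen2} follows by taking the $n\mapsto 0$ instance of \eqref{Burch-OpForm1} (equivalently, applying the Leibniz rule directly to $\mathcal{A}_{m,0}^{\gamma}$) with $f=\mathcal{Z}_{r,n}^{\gamma+m}$, inserting $\mathcal{Z}_{m-j,0}^{\gamma+j}=(\gamma+j+1)_{m-j}\,\bar z^{\,m-j}$ and $\partial_z^{j}\mathcal{Z}_{r,n}^{\gamma+m}=\tfrac{n!}{(n-j)!}(\gamma+m+r+1)_{j}\,\mathcal{Z}_{r,n-j}^{\gamma+m+j}$, observing that the normalizing ratio $C_{m+r,n}^{\gamma}/C_{r,n}^{\gamma+m}$ equals $1$, and dividing by $m!\,n!$.

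I expect the only genuinely delicate point to be establishing the ladder relation of the second paragraph and its iterate; once that is in hand the rest is careful but routine bookkeeping with Pochhammer symbols and factorials, together with the two observations that the $C_{\bullet,\bullet}^{\bullet}$ normalizations cancel and that the truncation of the double (resp.\ single) sum is forced by the vanishing of the relevant derivatives of $\mathcal{Z}_{r,s}^{\gamma+m+n}$ (resp.\ $\mathcal{Z}_{r,n}^{\gamma+m}$) beyond $j=s$, $k=r$ (resp.\ $j=n$).
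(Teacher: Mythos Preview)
Your proposal is correct and follows essentially the same route as the paper: the paper also reduces to $\mathcal{Z}_{m+r,n+s}^{\gamma}=\mathcal{B}_{m,n}^{\gamma}(\mathcal{Z}_{r,s}^{\gamma+m+n})$ and $\mathcal{Z}_{m+r,n}^{\gamma}=\mathcal{A}_{m,0}^{\gamma}(\mathcal{Z}_{r,n}^{\gamma+m})$, then applies \eqref{Burch-OpForm2} (resp.\ \eqref{Burch-OpForm1}) together with the iterated ladder formula \eqref{derivee} and the identity $\mathcal{Z}_{m-j,0}^{\gamma+j}=(\gamma+j+1)_{m-j}\bar z^{\,m-j}$. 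Your explicit semigroup factorizations of $\mathcal{A}$ and $\mathcal{B}$ are exactly what the paper hides behind the words ``we easily verify'', and your derivation of the ladder relation supplies the one step the paper simply quotes as \eqref{derivee}.
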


 \begin{proof} From the fact
 $ \displaystyle \mathcal{Z}_{r,s}^{\beta}(z,\bar{z}) = \mathcal{B}_{r,s}^{\beta}(1)$ and Definition \ref{diff-opZmn2} of $ \displaystyle \mathcal{B}_{m,n}^{\gamma}(f)$, we easily verify that $ \displaystyle  \mathcal{Z}_{m+r,n+s}^{\gamma}(z,\bar{z}):= \mathcal{B}_{m,n}^{\gamma}(\mathcal{Z}_{r,s}^{\gamma+m+n}(z,\bar{z})).$
 Thus, from the operational representation \eqref{Burch-OpForm2} combined with the fact
 \begin{equation} \label{derivee}
 \frac{\partial^{j+k}}{\partial z^j \partial \bar z^k} \left(\mathcal{Z}_{r,s}^{\alpha}(z,\bar z)\right)
 = \frac{r!s!(\alpha+r+1)_j(\alpha+s+1)_k}{(r-k)!(s-j)!}\mathcal{Z}_{r-k,s-j}^{\alpha+j+k}(z,\bar{z})
\end{equation}
for $j\leq s$ and $k\leq r$, we get
\begin{align*}
  \mathcal{Z}_{m+r,n+s}^{\gamma}(z,\bar{z})
    =
m!n!r!s! \sum_{j=0}^{\minms } \sum_{k=0}^{\minnr } & (\gamma+m+n+r+1)_j(\gamma+m+n+s+1)_k\frac{(-1)^{j+k} (1 -  |z|^2)^{j+k}}{j!k!}\\
& \times  \frac{ \mathcal{Z}_{m-j,n-k}^{\gamma+j+k}(z,\bar{z})}{(m-j)!(n-k)!}\frac{\mathcal{Z}_{r-k,s-j}^{\gamma+m+n+j+k}(z,\bar{z})}{(s-j)!(r-k)!}\,.
\end{align*}
The proof of \eqref{Nielsen2} lies essentially on the observation that
 $ \displaystyle \mathcal{Z}_{m+r,n}^{\gamma}(z,\bar{z}):= \mathcal{A}_{m,0}^{\gamma}(\mathcal{Z}_{r,n}^{\gamma+m}(z,\bar{z})),$
which can be handled from the representation \eqref{diff-opAmnf} of the operator $ \displaystyle \mathcal{A}_{m,0}^{\gamma}(f)$ together with the fact that
 $ \displaystyle \mathcal{Z}_{r,n}^{\beta}(z,\bar{z}) = \mathcal{A}_{r,n}^{\beta}(1)$.
Next, the operational formula \eqref{Burch-OpForm1} infers
  \begin{align*}
   \mathcal{Z}_{m+r,n}^{\gamma}(z,\bar{z})
  &= m!n!  \sum_{j=0}^{\minmn } (\gamma+m+r+1)_j \frac{(-1)^{j} (1 -  |z|^2)^{j}}{j!} \frac{ \mathcal{Z}_{m-j,0}^{\gamma+j}(z,\bar{z})}{(m-j)! }
       \frac{\mathcal{Z}_{r,n-j}^{\gamma+m+j}(z,\bar{z})}{(n-j)!}     .
  \end{align*}
  This leads to \eqref{Nielsen2} as $ \displaystyle \mathcal{Z}_{0,s}^{\alpha}(z,\bar{z})= (\alpha+1)_sz^s$.
\end{proof}

\begin{remark}
We recover  \eqref{Burch-OpForm1s} from \eqref{Nielsen1} by taking $s=0$.
For the specific values  $r=0$ in \eqref{Nielsen2} or $n=0$ in \eqref{Burch-OpForm1s}, we get the explicit expression of
the polynomials $ \displaystyle \mathcal{Z}_{m,n}^{\gamma}(z,\bar{z})$:
   \begin{align*}
   \mathcal{Z}_{m,n}^{\gamma}(z,\bar{z})
 =  m!n!  \sum_{j=0}^{\minmn } \frac{\Gamma(\gamma+m+n+1)}{\Gamma(\gamma+j+1)}
    \frac{(-1)^{j} (1 -  |z|^2)^{j}}{j!} \frac{\bar z^{m-j}}{(m-j)! } \frac{z^{n-j} }{(n-j)!}\,.
  \end{align*}
\end{remark}

\begin{remark}\label{WunscheDunkel}
The explicit expression above of $ \displaystyle \mathcal{Z}_{m,n}^{\gamma}(z,\bar{z})$ can be rewritten easily
in terms of the Gauss hypergeometric function ${_2F_1}$ as (\cite[p. 137]{Wunsch05})
     \begin{align*}
   \mathcal{Z}_{m,n}^{\gamma}(z,\bar{z})  =  (\gamma+1)_{m+n} \overline{z}^m z^n
   {_2F_1}\left( \begin{array}{c} -m , -n \\ \gamma+1 \end{array}\bigg | 1-\frac 1{|z|^2} \right)=(\gamma+1)_{m+n} \overline{P_{m,n}^{\gamma}(z, \bar z)}
  \end{align*}
 from which we can recover the well-known ${_2F_1}$ formula for the disk polynomials (see \cite[p. 692]{Dunkl83} or \cite[p. 535]{Dunkl84}),
     \begin{align*}
   \mathcal{Z}_{m,n}^{\gamma}(z,\bar{z})  =  \frac{\left((\gamma+1)_{m+n}\right)^2} {(\gamma+1)_{m}(\gamma+1)_{n}}\overline{z}^m z^n
   {_2F_1}\left( \begin{array}{c} -m , -n \\ -\gamma-m-n \end{array}\bigg | \frac 1{|z|^2} \right) = (\gamma+1)_{m+n}  \overline{R_{m,n}^{(\gamma)}(z)}.
  \end{align*}
\end{remark}

\section{Three-term recurrence relations}
The well known three-term recurrence relations for the disk polynomials can be obtained starting from their explicit expression in $z$ and $\bar z$ (see for example \cite{Koornwinder75,Koornwinder78,Dunkl83,Wunsch05,DunklXu14}). The basic one reads in terms of $ \displaystyle \mathcal{Z}_{m,n}^{\gamma}(z,\bar z)$ as
\begin{align*}
\left(  \frac{\gamma+m+1}{\gamma+m+n+1}\right) &\mathcal{Z}_{m+1,n}^{\gamma}(z,\bar{z})
\\&= \left( {\gamma+m+n+1}\right) \bar z \mathcal{Z}_{m,n}^{\gamma}(z,\bar{z})  - n   \left({\gamma+m+n}\right)\mathcal{Z}_{m,n-1}^{\gamma}(z,\bar{z})
   \end{align*}
involving the same argument $\gamma$. Below, we establish new recurrence relations for $ \displaystyle \mathcal{Z}_{m,n}^{\gamma}$ with respect to the indices $m,n$ and the
argument $\gamma$.

\begin{theorem}  We have the following  recursive relations with different arguments
\begin{align}
 & \mathcal{Z}_{m,n+1}^{\gamma}(z,\bar{z}) = (\gamma+m+n+1) \left\{ z \mathcal{Z}_{m,n}^{\gamma}(z,\bar{z}) - m (1 -  |z|^2) \mathcal{Z}_{m-1,n}^{\gamma+1}(z,\bar{z})\right\}, \label{TTR11} \\
& \mathcal{Z}_{m,n}^{\gamma}(z,\bar{z}) = \left( \frac{\gamma+m+1}{\gamma+m+n+1}\right) \mathcal{Z}_{m,n}^{\gamma+1}(z,\bar{z}) -  m \bar z \mathcal{Z}_{m-1,n}^{\gamma+1}(z,\bar{z}) \label{TTR21},\\
 &  \mathcal{Z}_{m,n}^{\gamma}(z,\bar{z}) = \frac{1}{\gamma+m+n+1}
 \left\{\bar z \mathcal{Z}_{m,n+1}^{\gamma}(z,\bar{z}) + (\gamma+m+1) (1 -  |z|^2) \mathcal{Z}_{m,n}^{\gamma+1}(z,\bar{z}) \right\} ,\label{TTR41} \\
 & \mathcal{Z}_{m+1,n}^{\gamma}(z,\bar z) = (\gamma+1) \bar z \mathcal{Z}_{m,n}^{\gamma+1}(z,\bar z) -
  n (\gamma+m+2)  (1 -  |z|^2) \mathcal{Z}_{m,n-1}^{\gamma+2}(z,\bar z)). \label{TTR51}
\end{align}
\end{theorem}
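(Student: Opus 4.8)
The plan is to derive all four recurrences as special cases of the operational machinery already in place, exactly in the spirit of the quadratic-recurrence section, by feeding well-chosen elementary functions $f$ into the Burchnall formulae of Theorem~\ref{Burch-OpForm}. The key observation is that each of \eqref{TTR11}--\eqref{TTR51} relates $\mathcal{Z}^\gamma_{m,n}$-type objects with shifted indices and shifted parameter, and such shifts are precisely what the operators $\mathcal{A}^\gamma_{m,n}$, $\mathcal{B}^\gamma_{m,n}$ and $\nabla^{\gamma,\gamma'}_{m,n}$ produce. Concretely, I would use the two low-order specializations of \eqref{Burch-OpForm1}: with $f=z$ (so only $j=0,1$ survive) one gets, via the identity $\mathcal{Z}^{\gamma+\beta}_{m,n+s}=(C^{\gamma+\beta}_{m,n+s}/C^\gamma_{m,n})\mathcal{A}^\gamma_{m,n}(z^s(1-|z|^2)^\beta)$ recorded just before \eqref{Burch-OpForm1s}, a two-term expression for $\mathcal{Z}^\gamma_{m,n+1}$ in terms of $z\mathcal{Z}^\gamma_{m,n}$ and $(1-|z|^2)\mathcal{Z}^{\gamma+1}_{m-1,n}$; tidying the Pochhammer prefactors $C^\gamma_{m,n+1}=(\gamma+m+1)_{n+1}$ against $C^\gamma_{m,n}=(\gamma+m+1)_n$ yields the factor $(\gamma+m+n+1)$ and hence \eqref{TTR11}. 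Similarly, \eqref{Burch-OpForm1beta} with $s=1$ gives a two-term relation between $\mathcal{Z}^{\gamma+1}_{m,n}$, $\bar z\,\mathcal{Z}^{\gamma+1}_{m-1,n}$ and $\mathcal{Z}^{\gamma}_{m,n}$, which after dividing through by the ratio $C^{\gamma+1}_{m,n}/C^\gamma_{m,n}=(\gamma+m+n+1)/(\gamma+m+1)$ is exactly \eqref{TTR21}.

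For \eqref{TTR41} I would not invoke a new operational identity but simply eliminate $\mathcal{Z}^{\gamma+1}_{m-1,n}$ between \eqref{TTR11} and \eqref{TTR21}: solve \eqref{TTR21} for $m\bar z\,\mathcal{Z}^{\gamma+1}_{m-1,n}$, multiply \eqref{TTR11} by the appropriate power/factor so the $\mathcal{Z}^{\gamma+1}_{m-1,n}$ terms match up to the $|z|^2$ bookkeeping, and combine; the $z\bar z=|z|^2$ and $1-|z|^2$ terms then collapse into the stated form. Equivalently one can read \eqref{TTR41} directly as the $f=(1-|z|^2)$, low-order instance of the $\bar z$-adjoint of \eqref{Burch-OpForm1} (using $\mathcal{Z}^\gamma_{m,n}(z,\bar z)=\mathcal{Z}^\gamma_{n,m}(\bar z,z)$ and the conjugation symmetry noted at the end of Section~2), which is perhaps cleaner. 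For the last relation \eqref{TTR51}, whose right-hand side mixes parameter shifts $\gamma\to\gamma+1$ and $\gamma\to\gamma+2$ together with a drop in $n$, the natural tool is the intertwining operator: from $\mathcal{Z}^\gamma_{m+1,n}= c\cdot\nabla^{\gamma'}_{1}(\text{something})$ — more precisely one writes $\mathcal{Z}^\gamma_{m+1,n}$ using the $\nabla$-representation \eqref{Nablam}/\eqref{2.12} as a single application of $\nabla_{\alpha}$ with $\alpha$ tied to $\gamma$ to $\mathcal{Z}^{\gamma+\cdots}_{m,n}$-data — and then applies the $j=0,1$ case of \eqref{Burch-OpFormNabla}; the ratio $(\gamma'+k-n)_{n-k}/(\gamma+k)_{n-k}$ in that formula is exactly what supplies the $(\gamma+m+2)$ factor and the second parameter shift. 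I would track the Pochhammer symbols carefully and invoke $\mathcal{Z}^\alpha_{0,s}=(\alpha+1)_s z^s$ and its conjugate to finish.

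The main obstacle will be the parameter/Pochhammer bookkeeping rather than any conceptual difficulty: the definitions carry three different normalizations ($\Phi$, $\mathcal{Z}$, the $C^\gamma_{m,n}$ factor) and each recurrence requires matching $C^{\gamma'}_{m',n'}/C^\gamma_{m,n}$ ratios against binomial coefficients $\binom{m}{1}$, $\binom{n}{1}$ and against the $(-m)_j=(-1)^j m!/(m-j)!$ rewriting, so an off-by-one in a Pochhammer index would corrupt the leading coefficient $(\gamma+m+n+1)$, $(\gamma+m+1)/(\gamma+m+n+1)$, etc. In particular \eqref{TTR51} is delicate because the $\nabla^{\gamma,\gamma'}_{m,n}$ formula \eqref{Burch-OpFormNabla} involves the shifted base parameter $(\gamma-1-m)$, so I must choose the "external" parameters of $\nabla$ so that this internal shift lands on $\gamma+1$ and $\gamma+2$ as required; verifying that alignment is where I expect to spend the most care. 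A useful sanity check throughout is to specialize to $n=0$ or $m=0$, where $\mathcal{Z}^\alpha_{m,0}=(\alpha+1)_m\bar z^m$ and $\mathcal{Z}^\alpha_{0,n}=(\alpha+1)_n z^n$ reduce every identity to an elementary statement about Pochhammer symbols, and to cross-check against the basic same-argument three-term relation displayed just before the theorem.
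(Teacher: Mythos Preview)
Your treatment of \eqref{TTR11}, \eqref{TTR21}, and \eqref{TTR41} is exactly the paper's: the first two are the $s=1$ specializations of \eqref{Burch-OpForm1s} and \eqref{Burch-OpForm1beta}, and the third is obtained by linearly combining them to eliminate $\mathcal{Z}^{\gamma+1}_{m-1,n}$.

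For \eqref{TTR51} your plan diverges from the paper and contains a misattribution. The paper does not invoke \eqref{Burch-OpFormNabla}. It first establishes the auxiliary identity
\[
(1-|z|^2)\,\partial_z\mathcal{Z}^{\gamma+1}_{m,n}(z,\bar z)=(\gamma+1)\bar z\,\mathcal{Z}^{\gamma+1}_{m,n}(z,\bar z)-\mathcal{Z}^{\gamma}_{m+1,n}(z,\bar z)
\]
by writing $\mathcal{Z}^{\gamma}_{m+1,n}=-(1-|z|^2)^{-\gamma}\partial_z\bigl[(1-|z|^2)^{\gamma+1}\mathcal{Z}^{\gamma+1}_{m,n}\bigr]$ (an immediate consequence of the Rodrigues representation) and applying the product rule once. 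Then \eqref{TTR51} follows by substituting the derivative formula \eqref{derivee} at $j=1$, $k=0$, which gives $\partial_z\mathcal{Z}^{\gamma+1}_{m,n}=n(\gamma+m+2)\,\mathcal{Z}^{\gamma+2}_{m,n-1}$. Your proposed route through \eqref{Burch-OpFormNabla} with a ``single application of $\nabla_\alpha$'' would, if carried out with $m=1$ and $n=0$ in that formula, collapse to precisely the same first step (since $\nabla^{\gamma,\gamma'}_{1,0}=\nabla_{\gamma-1}$). However, your claim that the ratio $(\gamma'+k-n)_{n-k}/(\gamma+k)_{n-k}$ is what supplies the factor $(\gamma+m+2)$ is incorrect: with $n=0$ that ratio is identically $1$. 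The coefficient $(\gamma+m+2)$ and the parameter shift $\gamma+1\to\gamma+2$ both come from \eqref{derivee}, not from the operational formula, so you would still need that ingredient to close the argument.
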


\begin{proof}
  \eqref{TTR11} is a special case of \eqref{Burch-OpForm1s} by taking $s=1$. It can also be derived directly from  \eqref{diff-opZernike2} by applying again   the derivative rule to
  $$\mathcal{Z}_{m+1,n}^{\gamma}(z,\bar{z}) =
  (-1)^{m+n} C^\gamma_{m,n} (1 -  |z|^2)^{-\gamma}\dfrac{\partial^m}{\partial z^m}\left(\dfrac{\partial}{\partial z}\left( z^n (1 -  |z|^2)^{\gamma+m+1}\right)\right)
  .$$
  The recurrence relation \eqref{TTR21} is a special case of \eqref{Burch-OpForm1beta} with $s=1$.
 \eqref{TTR41} follows by linear combinations of \eqref{TTR11} and \eqref{TTR21}.
 While \eqref{TTR51} is an immediate consequence of the well established fact
\begin{align}
  (1 -  |z|^2) \frac{\partial}{\partial z} (\mathcal{Z}_{m,n}^{\gamma+1}(z,\bar z)) = (\gamma+1) \bar z \mathcal{Z}_{m,n}^{\gamma+1}(z,\bar z) -  \mathcal{Z}_{m+1,n}^{\gamma}(z,\bar z) \label{TTR31}
\end{align}
combined with \eqref{derivee} for $j=1$ and $k=0$. \eqref{TTR31} is checked by writing $ \displaystyle \mathcal{Z}_{m+1,n}^{\gamma}(z,\bar{z})=\mathcal{A}_{m+1,n}^{\gamma}(1)$  as
  $$
  \mathcal{Z}_{m+1,n}^{\gamma}(z,\bar{z})= -(1 -  |z|^2)^{-\gamma}\dfrac{\partial}{\partial z}\left((1 -  |z|^2)^{\gamma+1}\mathcal{Z}_{m,n}^{\gamma+1}(z,\bar{z})\right)
  $$
   and next applying the derivative rule to the involved product.
\end{proof}

\begin{remark}
The basic three term recurrence relation mentioned in the beginning of this section appears as an immediate consequence of combining \eqref{TTR11} and \eqref{TTR41}.
Moreover, linear combinations of \eqref{TTR21} and \eqref{TTR31} give rise to the following
\begin{align}
\left\{(\gamma+m+1) \bar z - (1 -  |z|^2) \right\} \mathcal{Z}_{m,n}^{\gamma}(z,\bar{z}) =  \left(\frac{\gamma+m+1}{\gamma+m+n+1} \right) \mathcal{Z}_{m+1,n}^{\gamma}(z,\bar{z}) \label{TTR61}
\end{align}
which is equivalent to Eq. (6.2) in \cite{Wunsch05}.
\end{remark}

\begin{remark}
 The conjugate counterparts of the recurrence formulae \eqref{TTR11}, \eqref{TTR21}, \eqref{TTR41} and \eqref{TTR51} follow upon using
  $ \displaystyle \overline{\mathcal{Z}_{m,n}^\gamma (z,\bar z)} =  \mathcal{Z}_{n,m}^\gamma (z,\bar z)$.
\end{remark}

\section{A Runge's addition formula for disk polynomials}
In this section, we prove a Runge's addition formula for the disk polynomials. This formula, when used in its extended form, will help develop
a test for the assumption of bivariate normality (see \cite{Phipps71}).

\begin{theorem}
We have the following addition formula
\begin{align}
&\left(1 - \left|\frac{z+w}{\sqrt{2}}\right|^2\right)^{\gamma} \mathcal{Z}_{m,n}^{\gamma}\left(\frac{z+w}{\sqrt{2}},\frac{\overline{z+w}}{\sqrt{2}}\right)
=   \left(\frac{1}{2}\right)^{\gamma+m+\frac {m+n}2}  m!n!(\gamma+m+n)!   \label{RungeZ}\\
&\times \sum_{j=0}^{m} \sum_{k=0}^{n} \sum_{|\mathbf{s}|= \gamma+m} \frac{(-1)^{s_{3}+s_{4}}
\bar{z}^{s_{4}}\bar{w}^{s_{3}} h^{s_{1}-j}(z) h^{s_{2}-m+j}(w)}{\mathbf{s}! j! k!(m-j)!(n-k)!}  \frac{ \mathcal{Z}_{j,s_{3}+k}^{s_{1}-j}(z,\bar{z}) \mathcal{Z}_{m-j,s_{4}+n-k}^{s_{2}-m+j}(w,\bar{w})}{   (s_1+1)_{s_3+k}(s_2+1)_{s_4+n-k}   } \nonumber
\end{align}
for every $|z|<1$ and $|w|<1$ with $|z+w|<\sqrt{2}$,  where $\gamma +m$ is assumed to be a positive integer.
Here $ \displaystyle \binom{m+\gamma}{\mathbf{s}} :={(\gamma+m)! }/{\mathbf{s}!}$, $\mathbf{s}!=s_1!s_2!s_3!s_4!$ and $|\mathbf{s}|=s_{1}+s_{2}+s_{3}+s_{4}$ for $s=(s_1,s_2,s_3,s_4)$; $s_j \in \Z^{+}$.
\end{theorem}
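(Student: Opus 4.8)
The point of departure is the single-derivative Rodrigues representation \eqref{2.12}, written at the point $\zeta:=(z+w)/\sqrt{2}$ (so that $\bar\zeta=\overline{z+w}/\sqrt2$):
\[
\left(1-|\zeta|^2\right)^{\gamma}\mathcal{Z}_{m,n}^{\gamma}(\zeta,\bar\zeta)=(-1)^{m}C_{m,n}^{\gamma}\,\frac{\partial^{m}}{\partial\zeta^{m}}\Big(\zeta^{n}(1-|\zeta|^2)^{\gamma+m}\Big).
\]
Since $\gamma+m$ is a positive integer, $(1-|\zeta|^2)^{\gamma+m}$ is a polynomial in $z,w,\bar z,\bar w$. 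The whole proof consists in splitting $\zeta^{n}(1-|\zeta|^2)^{\gamma+m}$ into a finite sum of products of a factor depending only on $(z,\bar z)$ times a factor depending only on $(w,\bar w)$, and then exploiting that on such functions the operator $\partial_\zeta^{m}$ factors binomially through $\partial_z$ and $\partial_w$.

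\emph{Expansions.} First I would expand $\zeta^{n}=2^{-n/2}\sum_{k=0}^{n}\binom{n}{k}z^{k}w^{n-k}$ and, starting from $1-|\zeta|^2=\tfrac12\big[(1-|z|^2)+(1-|w|^2)-z\bar w-w\bar z\big]$, apply the multinomial theorem:
\[
(1-|\zeta|^2)^{\gamma+m}=\frac{1}{2^{\gamma+m}}\sum_{|\mathbf s|=\gamma+m}\binom{m+\gamma}{\mathbf s}(-1)^{s_3+s_4}\big[h(z)^{s_1}z^{s_3}\bar z^{s_4}\big]\big[h(w)^{s_2}\bar w^{s_3}w^{s_4}\big],
\]
where $h(z)=1-|z|^2$ as in the statement. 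Multiplying these two expansions and regrouping, $\zeta^{n}(1-|\zeta|^2)^{\gamma+m}$ becomes a finite sum over $k$ and $\mathbf s$ of terms equal to an explicit numerical constant times $z^{k+s_3}\bar z^{s_4}(1-|z|^2)^{s_1}$ times $w^{n-k+s_4}\bar w^{s_3}(1-|w|^2)^{s_2}$.

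\emph{Factorization of the operator.} Under the (orthogonal) linear change $z=(\zeta+\eta)/\sqrt2$, $w=(\zeta-\eta)/\sqrt2$ one has $\partial_z+\partial_w=\sqrt2\,\partial_\zeta$, so for any function $F$ of $(\zeta,\bar\zeta)$, viewed through $z,w$, the chain rule gives $\partial_\zeta^{m}F=2^{-m/2}\sum_{j=0}^{m}\binom{m}{j}\partial_z^{j}\partial_w^{m-j}F$. Applied to the sum above, $\partial_z^{j}$ acts only on the $(z,\bar z)$-factor and $\partial_w^{m-j}$ only on the $(w,\bar w)$-factor, and here \eqref{2.12} is used once more, now read in reverse:
\[
\frac{\partial^{j}}{\partial z^{j}}\Big(z^{k+s_3}(1-|z|^2)^{s_1}\Big)=\frac{(-1)^{j}}{(s_1+1)_{s_3+k}}\,(1-|z|^2)^{s_1-j}\,\mathcal{Z}_{j,\,s_3+k}^{\,s_1-j}(z,\bar z),
\]
because $C_{j,\,s_3+k}^{\,s_1-j}=(s_1+1)_{s_3+k}$, and symmetrically $\partial_w^{m-j}\big(w^{n-k+s_4}(1-|w|^2)^{s_2}\big)$ produces $\mathcal{Z}_{m-j,\,s_4+n-k}^{\,s_2-m+j}(w,\bar w)$ with normalizing constant $C_{m-j,\,n-k+s_4}^{\,s_2-m+j}=(s_2+1)_{s_4+n-k}$. (The products $h(z)^{s_1-j}\mathcal{Z}^{s_1-j}_{j,s_3+k}$ with possibly negative exponent are harmless bookkeeping: by the displayed identity they are genuine polynomials.)

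\emph{Bookkeeping and the main point.} It remains to collect constants. The three signs $(-1)^{m}$, $(-1)^{j}$, $(-1)^{m-j}$ issuing from the Rodrigues formulae multiply to $1$, leaving only $(-1)^{s_3+s_4}$; the powers of $2$ combine into $2^{-m/2}\cdot2^{-n/2}\cdot2^{-(\gamma+m)}=(1/2)^{\gamma+m+(m+n)/2}$; and, using $C_{m,n}^{\gamma}=(\gamma+m+1)_n=(\gamma+m+n)!/(\gamma+m)!$ together with $\binom{m+\gamma}{\mathbf s}=(\gamma+m)!/\mathbf s!$,
\[
C_{m,n}^{\gamma}\binom{m}{j}\binom{n}{k}\binom{m+\gamma}{\mathbf s}=\frac{m!\,n!\,(\gamma+m+n)!}{\mathbf s!\,j!\,k!\,(m-j)!\,(n-k)!},
\]
which is exactly the coefficient appearing in \eqref{RungeZ}. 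The hypotheses $|z|<1$, $|w|<1$, $|z+w|<\sqrt2$ only serve to place $\zeta$ in $D$ so that every term is well defined; the identity itself is algebraic (a rational-function identity in $1-|\zeta|^2$). I do not expect a conceptual obstacle here: beyond the Leibnitz, binomial and multinomial expansions, the only genuine ingredient is the relation $\partial_z+\partial_w=\sqrt2\,\partial_\zeta$ on functions of $\zeta$, and the real labour is simply keeping track of the four-component multi-index $\mathbf s$ while distributing $\partial_\zeta^{m}$ over the $z$- and $w$-variables.
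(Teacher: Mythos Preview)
Your proof is correct and follows essentially the same route as the paper: start from the single-derivative Rodrigues representation \eqref{2.12} at $\zeta=(z+w)/\sqrt2$, use $\partial_\zeta=\tfrac1{\sqrt2}(\partial_z+\partial_w)$ together with the binomial and multinomial expansions of $\zeta^n$ and $(1-|\zeta|^2)^{\gamma+m}$, and then recognise each resulting factor $\partial_z^{j}(z^{s_3+k}(1-|z|^2)^{s_1})$ and $\partial_w^{m-j}(w^{s_4+n-k}(1-|w|^2)^{s_2})$ as a disk polynomial via \eqref{2.12} read backwards. Your bookkeeping of signs, powers of $2$, and the identity $C_{m,n}^{\gamma}\binom{m}{j}\binom{n}{k}\binom{m+\gamma}{\mathbf s}=m!\,n!\,(\gamma+m+n)!/(\mathbf s!\,j!\,k!\,(m-j)!\,(n-k)!)$ is exactly what the paper leaves implicit, and your remark that the possibly negative exponents in $h(z)^{s_1-j}$ are harmless (since the product $h^{s_1-j}\mathcal Z^{s_1-j}_{j,s_3+k}$ is the polynomial $\partial_z^{j}(z^{s_3+k}h(z)^{s_1})$) is a useful clarification absent from the paper.
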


\begin{proof} Since $$  \mathcal{Z}_{m,n}^{\gamma}(z,\bar{z}) = (-1)^m  (\gamma+m+1)_n (1 -  |z|^2)^{-\gamma}\frac{\partial^{m}}{\partial z^m}(z^n (1 -  |z|^2)^{\gamma+m}),$$
we can write the left hand side of \eqref{RungeZ} as
\begin{align*}
 (-1)^m  (\gamma+m+1)_n \frac{\partial^m}{\partial \left(\frac{z+w}{\sqrt{2}}\right)^m}
\left(\left(\frac{z+w}{\sqrt{2}}\right)^n \left(1 - \left|\frac{z+w}{\sqrt{2}}\right|^2\right)^{\gamma + m}\right).
\end{align*}
Making use of the facts that
$$\frac{\partial}{\partial \left(\frac{z+w}{\sqrt{2}}\right)}= \frac 1{\sqrt{2}}  \left(\frac{\partial}{\partial z} +
\frac{\partial}{\partial w}\right) \quad \mbox { and } 
 (1 - |(z+w)/\sqrt{2}|^2) =  \frac 12 \left( (1 -  |z|^2) + (1 -  |w|^2) - z\bar w - \bar z w\right)$$
as well as the binomial formulae, including $ \displaystyle (X_{1}+X_{2}+X_{3}+X_{4})^{m}=\sum\limits_{|\mathbf{s}|=m}
\binom{m}{\mathbf{s}} X_{1}^{s_{1}}X_{2}^{s_{2}}X_{3}^{s_{3}}X_{4}^{s_{4}}$, the above expression reduces further to
\begin{align*}
(-1)^m \left(\frac{1}{2}\right)^{\gamma+2m+\frac {m+n}2}&
(\gamma+m+1)_n \sum_{j=0}^{m} \sum_{k=0}^{n} \sum_{|\mathbf{s}|= \gamma+m} (-1)^{s_{3}+s_{4}}
\binom{m}{j} \binom{n}{k} \binom{m+\gamma}{\mathbf{s}} \bar{z}^{s_{4}}\bar{w}^{s_{3}}
\\&
\times \frac{\partial^j}{\partial z^j} \left( z^{s_{3}+k} (1 -  |z|^2)^{s_{1}} \right)  \frac{\partial^{m-j}}{\partial w^{m-j}} \left( w^{s_{4}+n-k} (1 -  |w|^2)^{s_{2}}\right)
\end{align*}
that we can rewrite in terms of $ \displaystyle  \mathcal{Z}_{m,n}^{\gamma}(z,\bar{z})$  to obtain
\eqref{RungeZ}.
\end{proof}

\begin{remark}
Under the assumption $\gamma + m$ is a positive integer and by writing down \eqref{RungeZ} for the particular
 case of $z+w=0$, $m=n=0$, we get the identity
\begin{align*}
  \sum_{s_{1}+s_{2}+s_3+s_4= \gamma} \frac{ |z|^{2(s_3+s_4)} (1-|z|^2)^{s_{1}+s_{2}}}{s_{1}!s_{2}!s_3!s_4!}= \frac{2^{\gamma}}{\gamma!}.
\end{align*}
\end{remark}

\section{Generating functions}

In this section, we establish new generating functions involving disk polynomials as well as some of their direct consequences.

\begin{theorem} We have the following generating functions
 \begin{align}\label{GenFctZernike1}
  \sum_{n=0}^\infty \frac{v^n}{n!} \mathcal{Z}_{m,n}^\gamma(z,\bar z)
  =   m! \frac{{\bar z}^{m}}{\left(1-v z\right)^{\gamma+m+1}}  P^{(\gamma, 0)}_m\left( 1 - 2 \frac{v(1- z \bar z)}{\bar z(1-v z)} \right)
\end{align}
for every fixed $v\in\C$ such that $|v|<1$,  and
  \begin{align*}
  \sum_{m=0}^\infty \sum_{n=0}^\infty \frac{u^m}{m!}\frac{v^n}{n!} \mathcal{Z}_{m,n}^\gamma(z,\bar z)
  =    \left(\frac 1{1-v z-u\bar z}\right)^{\gamma+1}
 {_2F_1}\left( \begin{array}{c} \frac{\gamma+1}2 , \frac{\gamma+2}2 \\ \gamma+1 \end{array}
 \bigg | -  \frac{4u v(1-z\bar z)}{(1-v z-u\bar z)^2}\right)
 \end{align*}
 for every fixed $u,v\in\C$ such that $|u|<1$, $|v|<1$ and $|u|+|v|<1$.
 The involved series in $z$ converge absolutely and uniformly on compact sets of the unit disc.
\end{theorem}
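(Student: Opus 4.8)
The plan is to use the Burchnall-type operational formula \eqref{Burch-OpForm1} as the engine for both identities, exactly as the introduction advertises. For the first generating function, I would apply \eqref{Burch-OpForm1} with the operator $\mathcal{A}_{m,0}^{\gamma}$ acting on a cleverly chosen $f$ so that the left-hand side reproduces $\sum_n \frac{v^n}{n!}\mathcal{Z}_{m,n}^\gamma$. Concretely, starting from \eqref{2.12}, one has $\mathcal{Z}_{m,n}^\gamma(z,\bar z) = (-1)^m(\gamma+m+1)_n(1-|z|^2)^{-\gamma}\partial_z^m\big(z^n(1-|z|^2)^{\gamma+m}\big)$; multiplying by $v^n/n!$ and summing over $n$ collapses $\sum_n (\gamma+m+1)_n v^n z^n/n! = (1-vz)^{-\gamma-m-1}$ by the binomial series. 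Thus the generating sum equals $(-1)^m(1-|z|^2)^{-\gamma}\partial_z^m\big((1-|z|^2)^{\gamma+m}(1-vz)^{-\gamma-m-1}\big)$. Now I would apply the Leibniz rule to this $m$-fold $z$-derivative, differentiating $(1-vz)^{-\gamma-m-1}$ a total of $j$ times (giving a Pochhammer factor $(\gamma+m+1)_j v^j$ times $(1-vz)^{-\gamma-m-1-j}$) and $(1-|z|^2)^{\gamma+m}$ the remaining $m-j$ times (which, by \eqref{2.12} again with $n=0$, produces $\mathcal{Z}_{m-j,0}^{\gamma+j}(z,\bar z) = (\gamma+j+1)_{0}\cdots$, i.e. essentially $(-1)^{m-j}(1-|z|^2)^{\gamma+j}$ times a Jacobi-type polynomial in $|z|^2$). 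Collecting the factors, the sum over $j$ becomes, after extracting $\bar z^m$ and a power of $(1-vz)$, precisely a terminating ${}_2F_1$ in the variable $v(1-z\bar z)/\big(\bar z(1-vz)\big)$, which by the standard hypergeometric representation of Jacobi polynomials is the $P^{(\gamma,0)}_m$ appearing in \eqref{GenFctZernike1}.

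For the double generating function, the natural route is to feed the already-proven single sum back into another operational identity. Using $\mathcal{Z}_{m,n}^\gamma(z,\bar z) = \mathcal{B}_{0,0}$-type manipulations is awkward, so instead I would observe that $\sum_m \frac{u^m}{m!}\mathcal{Z}_{m,n}^\gamma$ is, by symmetry $\mathcal{Z}_{m,n}^\gamma(z,\bar z)=\mathcal{Z}_{n,m}^\gamma(\bar z,z)$, obtainable from \eqref{GenFctZernike1} with $(z,\bar z,v)$ replaced by $(\bar z, z, u)$. Then I would multiply by $v^n/n!$ and sum over $n$, inserting the explicit series for $P^{(\gamma,0)}_m$; the resulting double sum over $n$ and the Jacobi-summation index telescopes via two more applications of the binomial theorem. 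Alternatively — and this is probably cleaner — I would go back to the Rodrigues form, write $\sum_{m,n}\frac{u^m v^n}{m!n!}\mathcal{Z}_{m,n}^\gamma = (1-|z|^2)^{-\gamma}\sum_{m,n}\frac{(-u)^m(-v)^n}{m!n!}\,\partial_z^m\partial_{\bar z}^n\big((1-|z|^2)^{\gamma+m+n}\big)$ and recognize this as a "generating operator" $e^{-u\partial_z - v\partial_{\bar z}}$ applied formally, but handled rigorously by first summing in $n$ with $m$ fixed (reducing to the one-variable case just treated) and then in $m$. Either way one lands on a sum that, after completing the square in the exponents, matches the ${}_2F_1$ with argument $-4uv(1-z\bar z)/(1-vz-u\bar z)^2$; the quadratic transformation of Gauss's hypergeometric function (the one expressing ${}_2F_1(a,a+\tfrac12;2a;w)$ forms) is what converts the naive double sum into the stated closed form.

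The bookkeeping of Pochhammer symbols and the identification of the terminating ${}_2F_1$ with $P^{(\gamma,0)}_m$ — and in the second part, the recognition of the correct quadratic hypergeometric transformation — will be the main obstacles; everything else is binomial-series summation and Leibniz's rule. The final clause about absolute and uniform convergence on compact subsets of the disc is routine: for $|v|<1$ and $|z|\le r<1$ the terms $\frac{v^n}{n!}\mathcal{Z}_{m,n}^\gamma(z,\bar z)$ are dominated, using the explicit polynomial expression from the Remark following the Nielsen identities, by $C\,|v|^n n^{C'}$ with $C,C'$ depending only on $m,\gamma,r$, and similarly in the two-variable case under $|u|+|v|<1$; a Weierstrass $M$-test then gives the claim, and since each partial sum is holomorphic in $z$ on the disc, so is the limit.
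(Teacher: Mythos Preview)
Your treatment of the first identity matches the paper's almost exactly: both start from \eqref{2.12}, sum the binomial series $\sum_n (\gamma+m+1)_n (vz)^n/n! = (1-vz)^{-\gamma-m-1}$, and are left with $(-1)^m(1-|z|^2)^{-\gamma}\partial_z^m\big((1-vz)^{-\gamma-m-1}(1-z\bar z)^{\gamma+m}\big)$. The only difference is cosmetic: where you propose Leibniz followed by recognizing a terminating ${}_2F_1$, the paper simply quotes the closed formula
\[
\partial_t^m\big((1-xt)^\alpha(1-yt)^\beta\big)=(-1)^m m!\,y^m(1-xt)^\alpha(1-yt)^{\beta-m}\,P_m^{(\beta-m,\,-\alpha-\beta-1)}\!\left(1-2\tfrac{x(1-yt)}{y(1-xt)}\right),
\]
which is of course exactly what your Leibniz computation would produce.

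For the double sum the paper takes a shorter road than either of your two alternatives. It inserts the already-proven formula \eqref{GenFctZernike1}, multiplies by $u^m$, and obtains
\[
\left(\frac{1}{1-vz}\right)^{\gamma+1}\sum_{m\ge 0}\left(\frac{u\bar z}{1-vz}\right)^{m} P_m^{(\gamma,0)}\!\left(1-2\tfrac{v(1-z\bar z)}{\bar z(1-vz)}\right).
\]
This is a standard generating series for Jacobi polynomials (Rainville, p.~256):
\[
\sum_{m\ge 0}\frac{(\alpha+\beta+1)_m}{(\alpha+1)_m}\,t^m P_m^{(\alpha,\beta)}(x)
=(1-t)^{-(\alpha+\beta+1)}\,{}_2F_1\!\left(\tfrac{\alpha+\beta+1}{2},\tfrac{\alpha+\beta+2}{2};\alpha+1;\,\tfrac{2t(x-1)}{(1-t)^2}\right),
\]
and with $\alpha=\gamma$, $\beta=0$ the prefactor is $1$ and the right-hand side is exactly the stated ${}_2F_1$. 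No quadratic transformation is needed at this stage (the quadratic transformation shows up only later, in the proof of Corollary~\ref{cor7.2}). Your expansion-and-resummation route would effectively re-derive this Jacobi generating function by hand; it should work, but recognizing the Rainville identity is the shortcut you are missing.

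On convergence: the paper's justification for the double sum is not via polynomial bounds on $\mathcal{Z}_{m,n}^\gamma$ but via the uniform bound $|P_m^{(\gamma,0)}(x)|\le (\gamma+1)_m/m!$ (for $\gamma\ge 0$, and $\le 1$ for $-1<\gamma\le 0$) together with $|u\bar z/(1-vz)|<1$ when $|u|+|v|<1$, which feeds directly into the Jacobi series above.
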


\begin{proof} Starting from \eqref{diff-opZernike2}
and using the fact that the power series $ \displaystyle \sum\limits_{n\geq 0}^\infty (\gamma+m+1)_n\frac{(vz)^n}{n!}$ converges
absolutely and uniformly on compact sets of the unit diskfor every fixed $|v|<1$, as well as
the fact that $ \displaystyle \sum\limits_{n=0}^{+\infty} \dfrac{(a)_n}{n!} x^n= (1-x)^{-a}$; $|x|<1$, we get
 \begin{align*}
  \sum_{n=0}^\infty \frac{v^n}{n!} \mathcal{Z}_{m,n}^\gamma(z,\bar z)
 & =  (-1)^m (1 -  |z|^2)^{-\gamma} \frac{\partial^m}{\partial z^m}\left( \sum_{n\geq 0}^\infty (\gamma+m+1)_n \frac{(vz)^n}{n!} (1 -  |z|^2)^{\gamma+m} \right) \\
 &  = (-1)^m (1 -  |z|^2)^{-\gamma}\frac{\partial^m}{\partial z^m}\left( (1-vz)^{-(\gamma+1+m)} (1-z\bar z)^{\gamma+m} \right).
\end{align*}
Now, from the well known fact that
\begin{align*}
\frac{\partial^m}{\partial t^m} \left( (1-xt)^{\alpha} (1-yt)^{\beta} \right)
&= (-1)^m m! y^m (1-xt)^{\alpha} (1-yt)^{\beta-m}\\
&\qquad \times
P^{(\beta-m, -\alpha - \beta -1)}_m\left( 1 - 2 \frac{x(1-yt)}{y(1-xt)} \right),
\end{align*}
with $\alpha= -(\gamma+1+m)$, $\beta = \gamma+m$, $x=v$, $y=\bar z$ and $t=z$, we have
\begin{align*}
  \sum_{n=0}^\infty \frac{v^n}{n!} \mathcal{Z}_{m,n}^\gamma(z,\bar z)
=   m! \left(\frac{1}{1-v z}\right)^{\gamma+1} \left(\frac{\bar z}{1-v z}\right)^{m}
 P^{(\gamma, 0)}_m\left( 1 - 2 \frac{v(1- z \bar z)}{\bar z(1-v z)} \right) .
\end{align*}
 Therefore, it follows that
 \begin{align*}
  \sum_{m=0}^\infty \sum_{n=0}^\infty \frac{u^m}{m!}\frac{v^n}{n!} \mathcal{Z}_{m,n}^\gamma(z,\bar z)
 &=    \left(\frac{1}{1-v z}\right)^{\gamma+1} \sum_{m=0}^\infty \left(\frac{u\bar z}{1-v z}\right)^{m}
 P^{(\gamma, 0)}_m\left( 1 - 2 \frac{v(1- z \bar z)}{\bar z(1-v z)} \right) .
\end{align*}
The series in $z$ converges absolutely and uniformly on compact sets of the unit disc, for every fixed $|u|,|v|<1$ with $|u|+|v|<1$, since in this case
$ \left|\frac{u\bar z}{1-v z}\right| < 1$
and
$$ \left| P^{(\gamma, 0)}_m\left( 1 - 2 \frac{v(1- z \bar z)}{\bar z(1-v z)} \right) \right|
 \leq \left\{ \begin{array}{ll}  1 & \quad -1 <\gamma \leq 0 \\ \frac{(\gamma+1)_m}{m!}  & \quad \gamma \geq 0 \end{array} \right. .$$
Moreover, by means of the generating function \cite[p.256]{Rainville71}, namely
$$ \sum_{m=0}^\infty \frac{(\alpha+\beta+1)_m}{(\alpha+1)_m}t^m P^{(\alpha,\beta)}_m(x)=
 (1-t)^{-(\alpha+\beta+1)} {_2F_1}\left( \begin{array}{c} \frac{\alpha+\beta+1}2 , \frac{\alpha+\beta+2}2 \\ \alpha+1 \end{array}
 \bigg | 2\frac{t(x-1)}{(1-t)^2} \right),
 $$
with
$t=\frac{u\bar z}{1-v z}$, $\alpha=\gamma , \quad \beta= 0$ and $x=1 - 2 \frac{v(1- z \bar z)}{\bar z(1-v z)},$
 it follows that
  \begin{align*}
  \sum_{m=0}^\infty \sum_{n=0}^\infty \frac{u^m}{m!}\frac{v^n}{n!} \mathcal{Z}_{m,n}^\gamma(z,\bar z)
 =    \left(\frac 1{1-v z-u\bar z}\right)^{\gamma+1}
 {_2F_1}\left( \begin{array}{c} \frac{\gamma+1}2 , \frac{\gamma+2}2 \\ \gamma+1 \end{array}
 \bigg | - \frac{4u v(1-z\bar z)}{(1-v z-u\bar z)^2}\right).
 \end{align*}
\end{proof}

Consequently, we have

\begin{corollary} \label{cor7.2} The following summation formulae hold
  \begin{align}
& \sum_{n=0}^{+\infty} \frac{{\bar z}^{n}}{n!} \mathcal{Z}_{m,n}^{\gamma}(z,\bar{z})= (-1)^m m! {\bar z}^m \left(1-|z|^2\right)^{-(\gamma+m+1)}
\label{GenZernike1}\\
& \sum_{m,n=0}^{+\infty} \frac{z^m{\bar z}^{n}}{m!n!}\mathcal{Z}_{m,n}^{\gamma}(z,\bar{z}) = \left(1-|z|^2\right)^{-\gamma}
  \label{GenZernike2}\\
& \sum_{m,n=0}^{+\infty} \frac{(1 -  |z|^2)^{m}{\bar z}^{n}}{m!n!}\mathcal{Z}_{m,n}^{\gamma}(z,\bar{z})= \frac{ \left(1-|z|^2\right)^{-(\gamma+1)}}{1+\bar z}\,.
\label{GenZernike4}
\end{align}
\end{corollary}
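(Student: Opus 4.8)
The plan is to obtain each of the three summation formulae as a specialization of the two generating functions just established in the theorem, by substituting convenient values of the free parameters $u$ and $v$. For \eqref{GenZernike1}, I would set $v=\bar z$ in \eqref{GenFctZernike1}. Then the argument of the Jacobi polynomial becomes $1 - 2\frac{\bar z(1-z\bar z)}{\bar z(1-\bar z z)} = 1-2 = -1$, and using the standard value $P^{(\gamma,0)}_m(-1) = (-1)^m \binom{m+0}{m} = (-1)^m$, together with $1 - v z = 1-|z|^2$, the right-hand side collapses to $(-1)^m m! \, \bar z^m (1-|z|^2)^{-(\gamma+m+1)}$, which is exactly \eqref{GenZernike1}. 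The main thing to check here is that $|v|=|\bar z|<1$ on the unit disk, so the substitution is legitimate.

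For \eqref{GenZernike2} I would use the double generating function with $u=z$ and $v=\bar z$. Then $1 - vz - u\bar z = 1 - |z|^2 - |z|^2$... which is not quite $1-|z|^2$, so instead I would more naturally derive \eqref{GenZernike2} from \eqref{GenZernike1}: multiply \eqref{GenZernike1} by $z^m/m!$ and sum over $m$. The sum $\sum_m \frac{z^m}{m!}(-1)^m m! \bar z^m (1-|z|^2)^{-(\gamma+m+1)} = (1-|z|^2)^{-(\gamma+1)}\sum_m \left(\frac{-|z|^2}{1-|z|^2}\right)^m = (1-|z|^2)^{-(\gamma+1)} \cdot \frac{1}{1 + \frac{|z|^2}{1-|z|^2}} = (1-|z|^2)^{-(\gamma+1)}(1-|z|^2) = (1-|z|^2)^{-\gamma}$, which is \eqref{GenZernike2}. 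The geometric series converges since $\left|\frac{|z|^2}{1-|z|^2}\right|<1$ is equivalent to $|z|^2 < 1-|z|^2$, i.e. $|z|^2 < 1/2$; for larger $|z|$ one should instead invoke absolute/uniform convergence of the original double series on compacts and rearrange, or derive \eqref{GenZernike2} directly from the Rodrigues-type formula \eqref{diff-opZernike2} by recognizing $\sum_{m,n} \frac{z^m \bar z^n}{m!n!}\mathcal{Z}_{m,n}^\gamma$ as a double Taylor-type expansion.

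For \eqref{GenZernike4}, I would specialize \eqref{GenFctZernike1} first at $v=1$: the argument of the Jacobi polynomial becomes $1 - 2\frac{1-z\bar z}{\bar z(1-z)}$, which is messy, so instead I would multiply \eqref{GenFctZernike1} by $\frac{(1-|z|^2)^m}{m!}$ (i.e. substitute $u = (1-|z|^2)$ in the chain that produced the double generating function but keeping $v=\bar z$) and sum over $m$. Concretely, take the intermediate identity in the proof of the theorem, $\sum_{n}\frac{v^n}{n!}\mathcal{Z}_{m,n}^\gamma = m!\left(\frac{1}{1-vz}\right)^{\gamma+1}\left(\frac{\bar z}{1-vz}\right)^m P^{(\gamma,0)}_m(\cdots)$, put $v=\bar z$ so the Jacobi polynomial equals $(-1)^m$, multiply by $\frac{(1-|z|^2)^m}{m!}$ and sum: this gives $(1-|z|^2)^{-(\gamma+1)}\sum_m \left(\frac{-\bar z(1-|z|^2)}{1-|z|^2}\right)^m = (1-|z|^2)^{-(\gamma+1)}\sum_m (-\bar z)^m = \frac{(1-|z|^2)^{-(\gamma+1)}}{1+\bar z}$, which is \eqref{GenZernike4}. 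The geometric series $\sum_m(-\bar z)^m$ converges since $|\bar z|<1$.

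The only genuine obstacle I anticipate is justifying the interchange of the order of summation (summing over $m$ after having summed over $n$) and the convergence of the resulting geometric series on the full unit disk rather than just on $\{|z|^2<1/2\}$; this is handled by appealing to the absolute and uniform convergence of the relevant double series on compact subsets of $D$ asserted in the statement of the theorem, which legitimizes any rearrangement. Everything else reduces to the elementary evaluations $P^{(\gamma,0)}_m(-1)=(-1)^m$ and summing geometric series, so I would keep the write-up short, presenting \eqref{GenZernike1} as the substitution $v=\bar z$ in \eqref{GenFctZernike1}, and then obtaining \eqref{GenZernike2} and \eqref{GenZernike4} by the two further summations over $m$ described above.
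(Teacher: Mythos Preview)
Your argument for \eqref{GenZernike1} and \eqref{GenZernike4} is exactly the paper's: set $v=\bar z$ in \eqref{GenFctZernike1}, use $P^{(\gamma,0)}_m(-1)=(-1)^m$, and for \eqref{GenZernike4} multiply by $(1-|z|^2)^m/m!$ and sum the geometric series $\sum_m(-\bar z)^m$. The paper merely says \eqref{GenZernike4} ``can be easily checked making use of \eqref{GenZernike1}'', which is precisely what you wrote out.

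For \eqref{GenZernike2} you take a genuinely different and more elementary route. The paper specializes the \emph{double} generating function at $u=z$, $v=\bar z$, obtaining
\[
(1-2|z|^2)^{-(\gamma+1)}\,{_2F_1}\!\left(\tfrac{\gamma+1}{2},\tfrac{\gamma+2}{2};\gamma+1;\,-\tfrac{4|z|^2(1-|z|^2)}{(1-2|z|^2)^2}\right),
\]
and then invokes a quadratic transformation for ${_2F_1}$ together with \eqref{21-10} to collapse this to $(1-|z|^2)^{-\gamma}$. Your approach bypasses the hypergeometric machinery entirely: you just multiply \eqref{GenZernike1} by $z^m/m!$ and sum a geometric series. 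This is cleaner, and the convergence restriction you flag ($|z|^2<1/2$) is actually no worse than the paper's own, since the double generating function in the theorem is only asserted for $|u|+|v|<1$, which with $u=z$, $v=\bar z$ forces $|z|<1/2$. Neither proof explicitly extends the identity to the full disk; your awareness of this point and the suggestion to argue via a direct Rodrigues-type computation is a reasonable way to close the gap if needed.
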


\begin{proof}  It is worth observing that \eqref{GenZernike1} follows easily from \eqref{GenFctZernike1} by setting  $v=\bar z$ and $u =z$ therein and using the well known fact that $ \displaystyle P_m^{(\alpha,\beta)}(-1) = \frac{(-1)^m(1+\beta)_m}{n!}$.
For the same specification, the left hand side of \eqref{GenZernike2} reads as
  \begin{align*}
\sum_{m,n=0}^{+\infty} \frac{z^m{\bar z}^{n}}{m!n!}\mathcal{Z}_{m,n}^{\gamma}(z,\bar{z}) =  \left(1-2|z|^2\right)^{-(\gamma+1)}
  {_2F_1}\left( \begin{array}{c} \frac{\gamma+1}2 , \frac{\gamma+2}2 \\ \gamma+1 \end{array}\bigg | -\frac{4|z|^2(1-|z|^2)}{(1-2|z|^2)^2}\right)\,.
  \end{align*}
   Whence, \eqref{GenZernike2} follows by making use of the quadratic transformation
   $$
  \left(1-2\xi\right)^{-2a}{_2F_1}\left( \begin{array}{c} a , a + \frac{1}2   \\ a + b + \frac{1}2 \end{array}\bigg | \frac{4\xi(\xi-1)}{(2\xi-1)^2}\right)
  =  {_2F_1}\left( \begin{array}{c} 2a , 2b   \\ a + b + \frac{1}2 \end{array}\bigg | \xi \right); \quad |\xi| < 1,
   $$
   (see \cite[p. 176]{AndrewsAskeyRoy99} or also \cite[p. 180]{WangGuo89})
    with $2a=\gamma+1$ and $2b=\gamma$, combined with the fact that
     \begin{align}
     {_2F_1}\left( \begin{array}{c} \alpha , \beta \\ \alpha \end{array}\bigg | x\right) = {_1F_0}\left( \begin{array}{c}  \beta \\ - \end{array}\bigg | x\right) = (1-x)^{-\beta}; \quad |x| < 1.
     \label{21-10}
  \end{align}

 The last identity \eqref{GenZernike4} can be easily checked  making use of \eqref{GenZernike1}.
\end{proof}

\begin{remark}
\eqref{GenZernike4} can also be obtained by taking $u=(1 -  |z|^2)$ and $v=\bar z$ in \eqref{GenZernike2} and next making use of the quadratic transformation
(\cite[Theorem 3.1.1, p. 125]{AndrewsAskeyRoy99})
and the fact \eqref{21-10}.
\end{remark}

Further fascinating generating functions, including those involving the product of disk polynomials, may be obtained
from their analogs for the Jacobi polynomials.
For example, from the Bailey's bilinear generating function for the Jacobi polynomials \cite{Bailey38} (see also \cite[p.3]{ChenSrivastava95}),
 one deduces the following generating function for the product of disk polynomials with equal arguments and indices but with different variables:
\begin{align*}
&\sum_{n=0}^{\infty} \frac{t^{n}\mathcal{Z}_{n,n}^{\gamma-1}(z,\bar{z})\mathcal{Z}_{n,n}^{\gamma-1}(w,\bar{w}) }{[n!(\gamma+n)_{n}]^{2}}
\\&
= \frac1{(1+t)^{\gamma}}
F_{4}\left(\frac{\gamma}{2},\frac{\gamma+1}{2};\gamma-1,\gamma;\frac{4|z|^{2}|w|^{2}t}{(1+t)^{2}},\frac{4(1-|z|^{2})(1-|w|^{2})t}{(1+t)^{2}}\right),
\end{align*}
where $F_{4}(a,b;c,c';x,y)$
stands for the fourth Appell's function (\cite[p.265]{Rainville71}). Furthermore, we have
\begin{align*}
\sum_{n=0}^{\infty}\displaystyle\frac{(-1)^{n}(2m+1)_{n}}{(2m+n+1)_{n}}\dfrac{\mathcal{Z}_{m+n,n}^{m}(z,\bar{z})}{(m+n)! n!}
=\displaystyle \dfrac{(-1)^{m}}{m!}\bar{z}^{m} \left[1-2(1-2|z|^{2})w+w^{2}\right]^{-(m+\frac{1}{2})}
 \end{align*}
which can be handled by making use of the generating relation (\cite[Eq.3, p.276]{Rainville71}).

\section{Summation formulae}
In addition to the summation formulae in Corollary \ref{cor7.2} deduced from the above generating functions,
we prove in this section further summation formulae for the disk polynomials $ \displaystyle \mathcal{Z}_{m,n}^{\gamma}(z,\bar{z})$. We begin with the following

\begin{theorem}\label{ThmSum}
 Let ${_1F_1}$ denotes the confluent hypergeometric function. We have
\begin{align}\label{Zconfluent}
    \sum_{k=0}^\infty  \frac{ (-1)^k\overline{z}^k}{k!} \frac{\mathcal{Z}_{m,k}^{\gamma}(z,\bar{z}) }{(\gamma+m+1)_k}
 = (\gamma+1)_m  \overline{z}^{m}  e^{-|z|^2}  {_1F_1} \left( \begin{array}{c} -m\\ \gamma+1 \end{array}\bigg | |z|^2-1\right) .
\end{align}
\end{theorem}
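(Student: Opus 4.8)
The plan is to recognize the left-hand side of \eqref{Zconfluent} as the value $\mathcal{A}_{m,0}^{\gamma}(e^{-|z|^2})$ of the operator \eqref{diff-opAmnf} (taken with $n=0$, so that $C_{m,0}^{\gamma}=1$) on the Gaussian, and then to feed this into the Burchnall formula \eqref{Burch-OpForm1}. First I would use the representation \eqref{2.12}, which since $C_{m,k}^{\gamma}=(\gamma+m+1)_k$ reads
\[
\frac{\mathcal{Z}_{m,k}^{\gamma}(z,\bar z)}{(\gamma+m+1)_k} = (-1)^m (1 - |z|^2)^{-\gamma}\frac{\partial^m}{\partial z^m}\big(z^k(1 - |z|^2)^{\gamma+m}\big).
\]
Multiplying by $(-1)^k\bar z^k/k!$ and summing over $k$, the entire series $\sum_{k\ge 0}(-z\bar z)^k/k!$ converges locally uniformly together with all its $z$-derivatives, so one may pull $\partial_z^m$ out of the sum and use $\sum_{k\ge0}(-z\bar z)^k/k!=e^{-|z|^2}$ to obtain
\[
\sum_{k=0}^\infty \frac{(-1)^k\bar z^k}{k!}\frac{\mathcal{Z}_{m,k}^{\gamma}(z,\bar z)}{(\gamma+m+1)_k} = (-1)^m(1 - |z|^2)^{-\gamma}\frac{\partial^m}{\partial z^m}\big((1 - |z|^2)^{\gamma+m}e^{-|z|^2}\big) = \mathcal{A}_{m,0}^{\gamma}\big(e^{-|z|^2}\big),
\]
the last step being just \eqref{diff-opAmnf}. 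This also takes care of the convergence of the series on the left of \eqref{Zconfluent}.

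Next I would apply \eqref{Burch-OpForm1} with $n=0$ and $f(z)=e^{-|z|^2}$. Using $\partial_z^{j}e^{-|z|^2}=(-\bar z)^{j}e^{-|z|^2}$ together with the elementary evaluation $\mathcal{Z}_{m-j,0}^{\gamma+j}(z,\bar z)=(\gamma+j+1)_{m-j}\bar z^{m-j}$, the two factors $(-1)^j$ cancel and the formula collapses to the \emph{finite} sum
\[
\mathcal{A}_{m,0}^{\gamma}\big(e^{-|z|^2}\big) = m!\,\bar z^{m}e^{-|z|^2}\sum_{j=0}^{m}\frac{(1 - |z|^2)^j(\gamma+j+1)_{m-j}}{j!\,(m-j)!} = \bar z^{m}e^{-|z|^2}\sum_{j=0}^{m}\binom{m}{j}(\gamma+j+1)_{m-j}(1 - |z|^2)^j .
\]

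Finally it remains to identify this polynomial in $1 - |z|^2$ with $(\gamma+1)_m\,{_1F_1}$ evaluated at $|z|^2-1$. Expanding the confluent function (which terminates at order $m$) and using $(\gamma+1)_m/(\gamma+1)_j=(\gamma+1+j)_{m-j}$ together with $(-1)^j(-m)_j/j!=\binom{m}{j}$, one verifies termwise that
\[
(\gamma+1)_m\,{_1F_1}\!\left(\begin{array}{c}-m\\\gamma+1\end{array}\bigg|\,|z|^2-1\right) = \sum_{j=0}^{m}\binom{m}{j}(\gamma+1+j)_{m-j}(1 - |z|^2)^j,
\]
which is exactly the sum obtained above; combining the three displays gives \eqref{Zconfluent}. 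There is no serious obstacle here: the argument is a direct computation once the reduction to $\mathcal{A}_{m,0}^{\gamma}(e^{-|z|^2})$ and the Burchnall formula \eqref{Burch-OpForm1} are in place. The only steps requiring a word of justification are the term-by-term differentiation of the Gaussian power series (immediate from local uniform convergence) and the Pochhammer/sign bookkeeping in the last identification, which is the point where the confluent hypergeometric function actually emerges.
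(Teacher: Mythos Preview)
Your proof is correct and follows essentially the same approach as the paper: compute $\mathcal{A}_{m,0}^{\gamma}(e^{-|z|^2})$ in two ways, once by expanding $e^{-|z|^2}$ in a power series to produce the left-hand side, and once via the Burchnall formula \eqref{Burch-OpForm1} to produce the finite sum that you identify with the terminating ${_1F_1}$. The only difference is organisational: the paper first records the more general identity \eqref{GenZHa} (with parameters $n,r$ and complex Hermite polynomials $H_{r,j}$) as a separate Proposition and then specialises to $n=r=0$, whereas you go straight to that special case; you also spell out the Pochhammer bookkeeping for the ${_1F_1}$ identification, which the paper leaves implicit.
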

The proof relies on the following

\begin{proposition}
For every fixed positive integers $ m$ and $r$, we have
 \begin{align}
   \sum_{k=0}^\infty  \frac{ (-1)^k\overline{z}^k}{k!}   \frac{\mathcal{Z}_{m,n+r+k}^{\gamma}(z,\bar{z})}{(\gamma+m+n+1)_{r+k}}
    =  e^{-|z|^2} &\sum_{j=0}^m  \frac{(-1)^j(-m)_j (1 -  |z|^2)^{j}}{j!}   \label{GenZHa}\\
    &\times  \mathcal{Z}_{m-j,n}^{\gamma+j}(z,\bar{z}) H_{r,j}(z,\bar{z}). \nonumber
  \end{align}
\end{proposition}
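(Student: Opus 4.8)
The plan is to prove \eqref{GenZHa} by expanding the left-hand side using the operational formula \eqref{Burch-OpForm1s}, which expresses $\mathcal{Z}_{m,n+s}^{\gamma}$ as a finite sum over products $z^{s-j}(1-|z|^2)^j \mathcal{Z}_{m-j,n}^{\gamma+j}$. First I would take $s = r+k$ in \eqref{Burch-OpForm1s}, noting that $C_{m+n,r+k}^{\gamma} = (\gamma+m+n+1)_{r+k}$, so that the awkward Pochhammer denominator $(\gamma+m+n+1)_{r+k}$ in \eqref{GenZHa} cancels exactly against this constant. After this cancellation the $k$-th term of the series becomes
\begin{align*}
\frac{(-1)^k \bar z^k}{k!} \, m!\,n!\,(r+k)! \sum_{j=0}^{m\wedge(r+k)} \frac{(-1)^j z^{r+k-j}(1-|z|^2)^j}{(r+k-j)!\,j!} \frac{\mathcal{Z}_{m-j,n}^{\gamma+j}(z,\bar z)}{(m-j)!\,n!},
\end{align*}
and the $n!$'s cancel. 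The key observation is that, since $j$ only ever ranges up to $m$, and the terms with $j > m$ vanish anyway, I may freely extend the inner sum to $0 \le j \le m$ and then interchange the order of summation, summing over $k \ge 0$ first with $j$ fixed.

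Next I would isolate, for each fixed $j$, the inner series in $k$. After pulling out $\frac{(-1)^j (1-|z|^2)^j}{j!}\frac{m!\,\mathcal{Z}_{m-j,n}^{\gamma+j}(z,\bar z)}{(m-j)!}$, what remains is
\begin{align*}
\sum_{k=0}^{\infty} \frac{(-1)^k \bar z^k}{k!}\,\frac{(r+k)!}{(r+k-j)!}\, z^{r+k-j},
\end{align*}
where the factor $\frac{(r+k)!}{(r+k-j)!}$ should be read as $0$ when $r+k < j$. The main step is to recognise this series as a complex Hermite polynomial $H_{r,j}(z,\bar z)$ times $e^{-|z|^2}$, up to the sign $(-1)^j(-m)_j/m! \cdot (m-j)!/1$ bookkeeping — more precisely, I expect
\begin{align*}
\sum_{k=0}^{\infty} \frac{(-1)^k \bar z^k}{k!}\,\frac{(r+k)!}{(r+k-j)!}\, z^{r+k-j} = (-1)^j e^{-|z|^2} H_{r,j}(z,\bar z).
\end{align*}
To verify this I would use the known series representation $H_{r,j}(z,\bar z) = \sum_{\ell} \binom{r}{\ell}\binom{j}{\ell}\ell!\,(-1)^\ell z^{r-\ell}\bar z^{j-\ell}$ (or, equivalently, the generating-function / Rodrigues characterization \eqref{CHP}); writing $k = j - \ell + i$ or reindexing appropriately and comparing with the Taylor expansion of $e^{-z\bar z}$ should match the two sides. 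Combining this identity with the extracted prefactor, using $(-1)^j(-m)_j/m! \cdot m!/(m-j)! = (-1)^j(-m)_j \cdot (m-j)!/\big((m-j)!\big)$ — i.e. $\binom{m}{j}(-1)^j = (-m)_j/j!$ — reproduces exactly the right-hand side of \eqref{GenZHa}.

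The main obstacle I anticipate is purely one of bookkeeping: justifying the interchange of the two summations (absolute/uniform convergence on compact subsets of the disc, which holds because $|\bar z| < 1$ makes the $k$-series dominated termwise by a convergent exponential-type series, uniformly for $z$ in a compact set), and then correctly matching the combinatorial factors $\frac{(r+k)!}{(r+k-j)!}$ against the standard expansion of $H_{r,j}$ without sign or index errors. Once \eqref{GenZHa} is established, Theorem \ref{ThmSum} will follow by specializing $n = r = 0$: then $H_{0,j}(z,\bar z) = \bar z^{\,j}$ (since $H_{0,j}(z,\bar z) = \overline{H_{j,0}} = \overline{z^j}\cdot$ appropriate normalization), the sum over $j$ becomes $\sum_{j=0}^{m} \frac{(-m)_j}{j!}(1-|z|^2)^j \mathcal{Z}_{m-j,0}^{\gamma+j}(z,\bar z)\bar z^{\,j}$, and using $\mathcal{Z}_{m-j,0}^{\gamma+j}(z,\bar z) = (\gamma+j+1)_{m-j}\,\bar z^{\,m-j}$ together with $(\gamma+j+1)_{m-j} = (\gamma+1)_m/(\gamma+1)_j$ collapses the sum into $(\gamma+1)_m\,\bar z^{\,m} e^{-|z|^2}\,{}_1F_1(-m;\gamma+1;|z|^2-1)$, which is precisely \eqref{Zconfluent}.
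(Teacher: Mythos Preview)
Your proposal is correct and amounts to the same computation as the paper's proof, just organized in reverse order: the paper applies the operational formula \eqref{Burch-OpForm1} directly to the single function $f(z)=z^r e^{-|z|^2}$ and then equates the result with the series expansion $\mathcal{A}_{m,n}^{\gamma}(z^r e^{-|z|^2})=\sum_{k\ge 0}\frac{(-1)^k\bar z^k}{k!}\,\mathcal{Z}_{m,n+r+k}^{\gamma}/(\gamma+m+n+1)_{r+k}$, whereas you start from that series, apply \eqref{Burch-OpForm1s} termwise, and then resum. The decisive identity in both arguments is the same one, $\partial_z^{\,j}(z^r e^{-|z|^2})=(-1)^j e^{-|z|^2}H_{r,j}(z,\bar z)$, which in your write-up appears as the evaluation of the inner $k$-series; the paper's packaging avoids the explicit interchange of summations and the ad hoc convention $\frac{(r+k)!}{(r+k-j)!}=0$ for $r+k<j$, but the content is identical.
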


\begin{proof}
Taking $ \displaystyle f(z)= z^r e^{-|z|^2}$ in the operational formula \eqref{Burch-OpForm1}  yields
\begin{align}
  \mathcal{A}_{m,n}^{\gamma}(z^r e^{-|z|^2})
 &=     \sum_{j=0}^m \frac{m!}{(m-j)!} \frac{(-1)^{j}(1 -  |z|^2)^{j}}{j!}   \mathcal{Z}_{m-j,n}^{\gamma+j}(z,\bar{z})
  \frac{\partial^j}{\partial z^j} \left( z^re^{-|z|^2}\right). \label{GenZH1}
\end{align}
The $j^{\mbox{th}}$ derivative of $ \displaystyle  z^re^{-|z|^2}$ in the right hand side of the last equality \eqref{GenZH1} is connected to the complex Hermite polynomials by
$$\frac{\partial^j}{\partial z^j} \left( z^re^{-|z|^2}\right) = (-1)^{j} e^{-|z|^2}  H_{r,j}(z,\bar{z}),$$
so that \eqref{GenZH1} is further reduced to
\begin{align}
  \mathcal{A}_{m,n}^{\gamma}(z^r e^{-|z|^2})
&=   e^{-|z|^2}   \sum_{j=0}^m \frac{m!}{(m-j)!} \frac{(1 -  |z|^2)^{j}}{j!}   \mathcal{Z}_{m-j,n}^{\gamma+j}(z,\bar{z}) H_{r,j}(z,\bar{z}).
   \label{GenZH2}
\end{align}
On the other hand, by  expanding  $ e^{-|z|^2}$ as a power series and inserting it in the expression of
 $ \displaystyle  \mathcal{A}_{m,n}^{\gamma}(z^r e^{-|z|^2})$ given through  \eqref{diff-opAmnf}, we get
\begin{align}
  \mathcal{A}_{m,n}^{\gamma}(z^r e^{-|z|^2})
&=  \sum_{k=0}^\infty  (-1)^m C^\gamma_{m,n} (1 -  |z|^2)^{-\gamma}  \frac{\partial^m}{\partial z^m} \left( z^{n+r+k} (1 -  |z|^2)^{\gamma+m}\right)
  \frac{(-1)^k \overline{z}^k}{k!}   \nonumber \\
&=  \sum_{k=0}^\infty  \left(\frac{C^\gamma_{m,n}}{C^\gamma_{m,n+r+k}}\right) \frac{(-1)^k \overline{z}^k}{k!} \mathcal{Z}_{m,n+r+k}^{\gamma}(z,\bar{z}) \nonumber \\
   &=  \sum_{k=0}^\infty  \frac{(-1)^k}{C^\gamma_{m+n,r+k}} \frac{ \overline{z}^k}{k!} \mathcal{Z}_{m,n+r+k}^{\gamma}(z,\bar{z}) .
   \label{GenZH3}
\end{align}
Equating the two right hand sides of \eqref{GenZH2} and \eqref{GenZH3} gives
\begin{align*}
    \sum_{k=0}^\infty  \frac{(-1)^k}{C^\gamma_{m+n,r+k}} \frac{ \overline{z}^k}{k!} \mathcal{Z}_{m,n+r+k}^{\gamma}(z,\bar{z})
    =  e^{-|z|^2} \sum_{j=0}^m   \frac{m!}{(m-j)!} \frac{(1 -  |z|^2)^{j}}{j!}   \mathcal{Z}_{m-j,n}^{\gamma+j}(z,\bar{z}) H_{r,j}(z,\bar{z}),
\end{align*}
which yields \eqref{GenZHa}.
\end{proof}

\begin{proof}[Proof of Theorem \ref{ThmSum}]
This is in fact a particular case of \eqref{GenZHa}. Indeed, by taking $r=0$ and $n=0$ and keeping in mind that
$H_{0,j}(z,\bar{z})=  \overline{z}^j$ and $ \displaystyle  \mathcal{Z}_{m-j,0}^{\gamma+j}(z,\bar{z}) = (\gamma+j+1)_{m-j}\overline{z}^{m-j}$, we get \eqref{Zconfluent}.
\end{proof}

The following result shows that the monomial $z^m$ restricted to the unit disk has an expansion in terms of the polynomials
$\mathcal{Z}_{j,k}^{\alpha}(z,\bar{z})$.

\begin{theorem}\label{thm-monomial}
For every fixed positive integer $ m$, we have
 \begin{align} \label{monomial}
  \overline{z}^{m}
 = m! e^{-|z|^2} \sum_{n=0}^\infty \sum_{j=0}^m \frac{(-1)^j\overline{z}^{n+j} (1 -  |z|^2)^{j}}{j! (\gamma+1)_{m+n}}
  \frac{ \mathcal{Z}_{m-j,n}^{\gamma+j}(z,\bar{z})}{(m-j)!n! }.
\end{align}
\end{theorem}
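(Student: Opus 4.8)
The plan is to recognize \eqref{monomial} as essentially a re-packaging of the summation formula \eqref{GenZHa} (equivalently of \eqref{GenZH3} equated with \eqref{GenZH2}) in the special case $r=0$, $n=0$, followed by resumming over $n$ rather than collapsing it. First I would start from \eqref{GenZHa} with $r=0$, which reads
\begin{align*}
 \sum_{k=0}^\infty \frac{(-1)^k\bar z^k}{k!}\frac{\mathcal{Z}_{m,n+k}^{\gamma}(z,\bar z)}{(\gamma+m+n+1)_k}
 = e^{-|z|^2}\sum_{j=0}^m \frac{(-1)^j(-m)_j(1-|z|^2)^j}{j!}\mathcal{Z}_{m-j,n}^{\gamma+j}(z,\bar z) H_{0,j}(z,\bar z),
\end{align*}
and then use $H_{0,j}(z,\bar z)=\bar z^{j}$ together with $(-m)_j/j! = (-1)^j/((m-j)!\,j!)\cdot m!/m!$, i.e. $(-1)^j(-m)_j/j! = m!/((m-j)!\,j!)$ written more conveniently as $m!/((m-j)!)\cdot 1/j!$. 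This rewrites the right-hand side as $m!\,e^{-|z|^2}\sum_{j=0}^m \frac{(-1)^j\bar z^{j}(1-|z|^2)^j}{j!(m-j)!}\mathcal{Z}_{m-j,n}^{\gamma+j}(z,\bar z)$, which already matches the $j$-summand of \eqref{monomial} up to the factor $(\gamma+1)_{m+n}$ and the extra $\bar z^{n}$ and $n!$ in the denominator.

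The second step is to handle the left-hand side. I would rename the inner summation index and recombine the double sum over $n$ and $k$ in \eqref{monomial}: setting $N=n+k$ and summing over $N\geq 0$ with $k$ running from $0$ to $N$, the target identity \eqref{monomial} becomes, after dividing by $m!e^{-|z|^2}$ and using the computation above,
\begin{align*}
 \frac{\bar z^m e^{|z|^2}}{m!} = \sum_{N=0}^\infty \frac{1}{(\gamma+1)_{m+N}}\left(\sum_{k=0}^N \frac{\bar z^{N-k}}{(N-k)!}\cdot\frac{(-1)^k\bar z^k}{k!}\cdot\frac{1}{m!}\sum_{j=0}^m\cdots\right),
\end{align*}
so the cleanest route is instead to sum \eqref{GenZHa} (with $r=0$) over $n$ against the weight $\bar z^{n}/(n!\,(\gamma+1)_m\cdot\text{(appropriate Pochhammer)})$. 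Concretely: multiply the $r=0$ case of \eqref{GenZHa} by $\bar z^{n}/n!$, note that $(\gamma+m+n+1)_k\cdot(\gamma+1)_{m+n} = (\gamma+1)_{m+n+k}$ wait — more precisely $(\gamma+m+n+1)_k = (\gamma+1)_{m+n+k}/(\gamma+1)_{m+n}$, so $\frac{1}{(\gamma+m+n+1)_k(\gamma+1)_{m+n}} = \frac{1}{(\gamma+1)_{m+n+k}}$. Summing over $n\geq 0$ and over $k\geq 0$, the left side becomes $\sum_{n,k\geq 0}\frac{(-1)^k\bar z^{n+k}}{n!\,k!}\frac{\mathcal{Z}_{m,n+k}^\gamma(z,\bar z)}{(\gamma+1)_{m+n+k}}$; collecting terms with $n+k=N$ and invoking the explicit binomial identity $\sum_{k=0}^N \binom{N}{k}(-1)^k = 0$ for $N\geq 1$ is NOT what happens because $\mathcal{Z}$ depends on $N$ not on $k$; rather, $\sum_{k=0}^N \frac{(-1)^k\bar z^{N}}{n!\,k!}$ with $n=N-k$ telescopes via $\sum_{k=0}^N\frac{(-1)^k}{k!(N-k)!} = \frac{1}{N!}\sum_k\binom{N}{k}(-1)^k = 0$ for $N\geq 1$ and $=1$ for $N=0$. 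Hence the entire left side collapses to just the $N=0$ term, namely $\frac{\mathcal{Z}_{m,0}^\gamma(z,\bar z)}{(\gamma+1)_m} = \frac{(\gamma+1)_m\bar z^m}{(\gamma+1)_m} = \bar z^m$.

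Therefore, carrying out the same weighted sum $\sum_{n\geq 0}\bar z^n/n!$ on the \emph{right} side of \eqref{GenZHa} and equating with the collapsed left side $\bar z^m$ gives exactly \eqref{monomial} after reinserting the $j$-sum rewriting from the first step and multiplying through by $m!e^{-|z|^2}$. I expect the main obstacle to be bookkeeping: justifying the interchange of the three summations (over $n$, $k$, and $j$) requires absolute convergence, which follows since for $|z|<1$ the series $\sum_N |z|^{2N}/((\gamma+1)_{m+N}N!)$ and the finite $j$-sum are dominated by convergent series (the polynomials $\mathcal{Z}_{m-j,n}^{\gamma+j}$ grow only polynomially in $n$ on compacta, and $(\gamma+1)_{m+N}$ grows factorially), and the key arithmetic identity is the Pochhammer collapse $(\gamma+m+n+1)_k(\gamma+1)_{m+n}=(\gamma+1)_{m+n+k}$ together with the vanishing of the alternating binomial sum. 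Everything else is the routine substitution $H_{0,j}(z,\bar z)=\bar z^j$ and the Pochhammer manipulation $(-1)^j(-m)_j/j!=m!/((m-j)!\,j!)$ already used elsewhere in the paper.
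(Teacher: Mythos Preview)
Your approach is correct but differs from the paper's. The paper does not pass through \eqref{GenZHa} at all; instead it applies the operator $\mathcal{A}_{m,n}^{\gamma}$ directly to the test function $f=\bar z^{\,n}e^{-|z|^2}$, sums over $n\ge 0$ with weight $1/(n!\,C^\gamma_{m,n})$, and observes that on the ``raw'' side the sum $\sum_n |z|^{2n}/n!=e^{|z|^2}$ exactly cancels the exponential, leaving $(-1)^m(1-|z|^2)^{-\gamma}\partial_z^m((1-|z|^2)^{\gamma+m})=(\gamma+1)_m\bar z^{\,m}$; the operational formula \eqref{Burch-OpForm1} then supplies the right-hand side of \eqref{monomial} immediately. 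Your route recycles \eqref{GenZHa} (itself obtained from $f=z^r e^{-|z|^2}$) and recovers $\bar z^{\,m}$ by the Pochhammer collapse $(\gamma+1)_{m+n}(\gamma+m+n+1)_k=(\gamma+1)_{m+n+k}$ together with the alternating binomial identity $\sum_{k=0}^N\binom{N}{k}(-1)^k=\delta_{N,0}$, which kills all $N\ge 1$ terms. Both arguments are short; the paper's is slightly more economical since it needs only the elementary exponential series rather than the combinatorial collapse, whereas your argument has the advantage of deriving the theorem as a direct corollary of \eqref{GenZHa}. One small bookkeeping slip: you correctly compute $(-1)^j(-m)_j/j!=m!/((m-j)!\,j!)$ and then reinsert an extra $(-1)^j$ in the displayed right-hand side; be consistent with whichever sign convention the target identity carries.
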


\begin{proof}
The action of the operator  $\mathcal{A}_{m,n}^{\gamma}$ in \eqref{diff-opAmnf} on $f(z)=  \overline{z}^n e^{-|z|^2}$ is given by
\begin{align*}
  \mathcal{A}_{m,n}^{\gamma}(\overline{z}^n e^{-|z|^2}) =  (-1)^m C^\gamma_{m,n} (1 -  |z|^2)^{-\gamma}  \frac{\partial^m}{\partial z^m} \left( |z|^{2n} (1 -  |z|^2)^{\gamma+m}e^{-|z|^2}\right)
\end{align*}
so that
\begin{align}
\sum_{n=0}^\infty \frac{ \mathcal{A}_{m,n}^{\gamma}(\overline{z}^n e^{-|z|^2})}{n! C^\gamma_{m,n}}
 = (-1)^m   (1 -  |z|^2)^{-\gamma}  \frac{\partial^m}{\partial z^m} \left( (1 -  |z|^2)^{\gamma+m}\right)
 =  (\gamma+1)_m \overline{z}^{m}. \label{GenFOpF1}
\end{align}
The last equality follows since $ \displaystyle  \frac{\partial^{k}}{\partial z^k}((1 -  |z|^2)^{\beta}) = (-\beta)_k {\bar z}^k (1 -  |z|^2)^{\beta-k}$ and $(-a)_k=(-1)^k(a-k+1)_k$. On the other hand, using the operational formula \eqref{Burch-OpForm1}, the left
 hand side of \eqref{GenFOpF1} can be rewritten as
 \begin{align}\label{GenFOpF2}
\sum_{n=0}^\infty \frac{ \mathcal{A}_{m,n}^{\gamma}(\overline{z}^n e^{-|z|^2})}{n! C^\gamma_{m,n}}
  = m!e^{-|z|^2}\sum_{n=0}^\infty \sum_{j=0}^m \frac{(-1)^j\overline{z}^{n+j} (1 -  |z|^2)^{j}}{j! C^\gamma_{m,n}}
  \frac{ \mathcal{Z}_{m-j,n}^{\gamma+j}(z,\bar{z})}{(m-j)!n! }\,,
\end{align}
and so \eqref{monomial} follows by equating the right hand sides of \eqref{GenFOpF1} and \eqref{GenFOpF2}.
\end{proof}

We conclude the paper with the following summation formula involving the $ \displaystyle \mathcal{Z}_{m,n}^{\gamma}(z,\bar{z})$ and that follows from Theorem \ref{thm-monomial} above. Namely,  we assert
\begin{corollary} We have
 \begin{align} \label{GenFctMon-e}
e^{\overline{z}(1+z)}
 = \sum_{m=0}^\infty \sum_{n=0}^\infty \sum_{j=0}^\infty \frac{(-1)^j\overline{z}^{n+j} (1 -  |z|^2)^{j}}{j! (\gamma+1)_{m+j+n}}
  \frac{ \mathcal{Z}_{m,n}^{\gamma+j}(z,\bar{z})}{m!n! }.
\end{align}
\end{corollary}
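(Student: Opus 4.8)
The plan is to derive \eqref{GenFctMon-e} by summing the expansion \eqref{monomial} of $\overline{z}^m$ over $m$ against the weights $1/m!$, thereby producing $e^{\bar z}$ on the left and a triple sum on the right. Concretely, I would start from Theorem \ref{thm-monomial}, which asserts
\begin{align*}
  \overline{z}^{m}
 = m!\, e^{-|z|^2} \sum_{n=0}^\infty \sum_{j=0}^m \frac{(-1)^j\overline{z}^{n+j} (1 -  |z|^2)^{j}}{j!\, (\gamma+1)_{m+n}}
  \frac{ \mathcal{Z}_{m-j,n}^{\gamma+j}(z,\bar{z})}{(m-j)!\,n! },
\end{align*}
divide both sides by $m!$, and sum over $m\geq 0$. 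The left-hand side becomes $\sum_{m\geq 0} \overline{z}^m/m! = e^{\bar z}$, so after multiplying through by $e^{|z|^2} = e^{\bar z z}$ one obtains $e^{\bar z} e^{|z|^2} = e^{\bar z(1+z)}$, matching the left-hand side of \eqref{GenFctMon-e}.

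The second step is to reindex the right-hand side. After dividing by $m!$ and cancelling it against the $(m-j)!$ in the denominator, the double sum over $0\le j\le m$ and $n\ge0$ becomes, upon the substitution $m\mapsto m+j$ (so the new $m$ runs over all nonnegative integers independently of $j$),
\begin{align*}
\sum_{m=0}^\infty \sum_{n=0}^\infty \sum_{j=0}^\infty \frac{(-1)^j\overline{z}^{n+j} (1 -  |z|^2)^{j}}{j!\, (\gamma+1)_{m+j+n}}
  \frac{ \mathcal{Z}_{m,n}^{\gamma+j}(z,\bar{z})}{m!\,n! },
\end{align*}
which is precisely the right-hand side of \eqref{GenFctMon-e}. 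The Pochhammer index $(\gamma+1)_{m+n}$ in \eqref{monomial} turns into $(\gamma+1)_{m+j+n}$ under $m\mapsto m+j$, as required, and the superscript $\gamma+j$ on the disk polynomial is unaffected by the shift.

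The only genuine issue is justifying the interchange of the outer sum over $m$ with the inner double sum, i.e.\ the absolute convergence of the resulting triple series on compact subsets of the unit disc; everything else is bookkeeping. I would handle this by noting that \eqref{monomial} itself already encodes an absolutely convergent expansion for each fixed $m$ (it is obtained in Theorem \ref{thm-monomial} by equating \eqref{GenFOpF1} and \eqref{GenFOpF2}, where the series $\sum_n \mathcal{A}_{m,n}^{\gamma}(\overline z^n e^{-|z|^2})/(n!\,C_{m,n}^\gamma)$ converges because it comes from term-by-term differentiation of the entire function $e^{-|z|^2}$ times a polynomial factor), and then using standard Weierstrass-type estimates: on $|z|\le \rho<1$ the disk polynomials $\mathcal{Z}_{m,n}^{\gamma+j}$ grow only polynomially in the indices, the factor $1/(\gamma+1)_{m+j+n}$ decays super-exponentially, and $e^{-|z|^2}\le 1$, so the triple sum is dominated by a convergent numerical series. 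Given this uniform bound, Fubini/Tonelli permits all the rearrangements, completing the proof.
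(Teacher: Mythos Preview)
Your proposal is correct and follows exactly the paper's own argument: the paper's proof consists of the single observation that \eqref{GenFctMon-e} follows from \eqref{monomial} via the reindexing identity $\sum_{m=0}^\infty \sum_{j=0}^m A(m,j) = \sum_{m=0}^\infty \sum_{j=0}^\infty A(m+j,j)$, which is precisely your substitution $m\mapsto m+j$ after dividing \eqref{monomial} by $m!$ and summing. Your additional discussion of absolute convergence is more careful than the paper, which omits this point entirely.
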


\begin{proof}
This follows from \eqref{monomial}  since
$ \sum\limits_{m=0}^\infty  \sum\limits_{j=0}^m A(m,j) = \sum\limits_{m=0}^\infty \sum\limits_{j=0}^\infty A(m+j,j)$.
\end{proof}

\noindent{\bf Acknowledegement:}
The authors are indebted to the anonymous referee for providing insightful comments, remarks and suggestions.
We would like to thank Prof. Abdellah Sebbar for the effort to spend time to read the paper and make useful corrections.
The assistance of the members of the seminars ``Partial differential equations and spectral geometry" is gratefully acknowledged.

\end{document}